\documentclass[12pt,a4paper]{amsart}
\usepackage{amsfonts}
\usepackage{amsthm}
\usepackage{amsmath}
\usepackage{amscd}
\usepackage[latin2]{inputenc}
\usepackage{t1enc}
\usepackage[mathscr]{eucal}
\usepackage{indentfirst}
\usepackage{graphicx}
\usepackage{graphics}
\usepackage{pict2e}
\usepackage{epic}
\numberwithin{equation}{section}
\usepackage[margin=2.9cm]{geometry}
\usepackage{epstopdf} 
\usepackage{hyperref}
\hypersetup{
    colorlinks = true,
    urlcolor   = blue,
    citecolor  = blue,
}

\theoremstyle{plain}
\newtheorem{Th}{Theorem}[section]
\newtheorem{Lem}[Th]{Lemma}

\newtheorem{Prop}[Th]{Proposition}

\newtheoremstyle{TheoremNum}
        {\topsep}{\topsep}         
        {\itshape}                      
        {}                                  
        {\bfseries}                     
        {.}                                 
        { }                                 
        {\thmname{#1}\thmnote{ \bfseries #3}}
\theoremstyle{TheoremNum}
\newtheorem{Thmn}{Theorem}

\theoremstyle{definition}
\newtheorem{Def}[Th]{Definition}
\newtheorem{Conj}[Th]{Conjecture}
\newtheorem{Rem}[Th]{Remark}
\newtheorem{?}[Th]{Problem}

\newcommand{\ns}{\mathcal{N S}(S_g)}
\newcommand{\dns}{d_{\text NS}}

\begin{document}

\title{Origami edge-paths in the curve graph}

\author{Hong Chang, Xifeng Jin, William W. Menasco}

\address{
\noindent
Hong Chang, hchang24@buffalo.edu, Department of Mathematics, University at Buffalo--SUNY
\newline
\indent
Xifeng Jin, xifengji@buffalo.edu, Department of Mathematics, University at Buffalo--SUNY
\newline
\indent
William W. Menasco, menasco@buffalo.edu, Department of Mathematics, University at Buffalo--SUNY} 

\subjclass[2010]{Primary: 57M60. Secondary: 20F38.}

\keywords{curve graph, origami, coherent pair, origami pair of curves.}

\begin{abstract}
An {\em origami} (or flat structure) on a closed oriented surface, $S_g$, of genus $g \geq 2$ is obtained from a finite collection of unit Euclidean squares by gluing each right edge to a left one and each top edge to a bottom one.
The main objects of study in this note are \emph{origami pairs of curves}---filling pairs of simple closed curves, $ (\alpha,\beta)$, in $S_g$ such that their minimal intersection is equal to their algebraic intersection---they are \emph{coherent}.  
An origami pair of curves is naturally associated with an origami on $S_g$.
Our main result establishes that for any origami pair of curves there exists an \emph{origami edge-path}, a sequence of curves, $\alpha =\alpha_0, \alpha_1, \alpha_2, \cdots, \alpha_n = \beta $, such that: $\alpha_i$ intersects $\alpha_{i+1}$ exactly once; any pair $(\alpha_i, \alpha_j)$ is coherent; and thus, any filling pair, $(\alpha_i, \alpha_j)$, is also an origami.
With their existence established, we offer shortest origami edge-paths as an area of investigation.
\end{abstract}
\maketitle


\section{Introduction.}


\subsection{Main results.}
\label{main results}
For this note we will consider a variant of Harvey's \cite{Harvey} curve graph for $S_g$, a closed oriented surface of genus $g \geq 2$.
To start, we will focus on \emph{non-separating} simple closed curves in $S_g$---homotopically non-trivial embedded curves whose complement in $S_g$ is connected.
Let the \emph{non-separating curve graph} $\ns$ be the graph whose vertices are isotopy classes of non-separating simple closed curves; and,
two vertices of $\ns$ will share an edge if they have curve representatives that are disjoint.  
If each edge is endowed with length one, $\ns$ is a metric space---the length between two vertices corresponds to the number of edges in the shortest edge-path
between them.  We will use $\dns (u,v)$ as the notation for the metric function giving us the distance between $u$ and $v$. The non-separating curve graph $\ns$ has been studied in \cite{FM, Ha, Ra-1, Sc}.

Let $\alpha ,\beta \subset S_g$, be two non-separating curves representing vertices $u,v \in \ns$, respectively.  We assume $|\alpha \cap \beta|$ to be minimal over all isotopic
representatives of, say $u$.
We will abuse notation by having $|u \cap v| := |\alpha \cap \beta|$.
The pair $(\alpha , \beta)$ is a \emph{filling pair} when all components of $S_g \setminus (\alpha \cup \beta)$ are topological open discs.
For convenience, we will also say $(u,v)$ is a \emph{filling pair} when $(\alpha, \beta)$ is a filling pair.
An assignment of orientation to both curves allows one to calculate the algebraic intersection number of the curves.  If the absolute value of their
algebraic intersection number is equal to $|\alpha \cap \beta|$ then $(\alpha, \beta)$ is a \emph{coherent pair}.
Again for convenience, we will also say $(u,v)$ is a \emph{coherent pair} if $(\alpha, \beta)$ is a coherent pair.

An \emph{origami} for a closed surface $S_{g \geq 2}$ is obtained from a finite number of Euclidean squares by gluing each right edge to a left one and each top edge to a bottom one.  Our
first result, which has also been independently shown by Jeffreys \cite{Jef}, establishes a correspondence between a coherent filling pair and an origami on $S_g$.

\begin{Th}
\label{ori}
A coherent filling pair of curves naturally corresponds to an origami on $S_g$. 
\end{Th}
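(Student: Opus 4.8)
The plan is to construct the origami directly from the combinatorics of the pair, placing one unit square at each intersection point. Write $n=|\alpha\cap\beta|$ and let $p_1,\dots,p_n$ be the intersection points. First I would exploit coherence to orient the curves: after choosing orientations on $\alpha$ and $\beta$, the algebraic intersection number is $\sum_k \varepsilon_k$ with $\varepsilon_k=\pm1$, while the geometric number is $n$; coherence says $\bigl|\sum_k \varepsilon_k\bigr| = n$, which forces all $\varepsilon_k$ to be equal. Reversing one orientation if necessary, I may assume every crossing is \emph{positive}, i.e.\ the ordered frame $(\dot\alpha,\dot\beta)$ agrees with the orientation of $S_g$ at each $p_k$.

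Next I would attach to each $p_k$ a unit Euclidean square $Q_k$ centered at $p_k$, placed so that the oriented arc of $\alpha$ through $p_k$ runs left-to-right along the horizontal midline and the oriented arc of $\beta$ runs bottom-to-top along the vertical midline. This is the one place where coherence does all the work: because every crossing is positive, the convention ``$\alpha$ is horizontal and rightward, $\beta$ is vertical and upward'' can be imposed \emph{simultaneously} at all $p_k$. Following $\alpha$ from $p_k$ to the next intersection point $p_{h(k)}$ then leaves $Q_k$ through its right edge and enters $Q_{h(k)}$ through its left edge, so I glue the right edge of $Q_k$ to the left edge of $Q_{h(k)}$ by a horizontal translation; symmetrically, following $\beta$ from $p_k$ to $p_{v(k)}$ glues the top edge of $Q_k$ to the bottom edge of $Q_{v(k)}$ by a vertical translation. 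By construction this is exactly an origami: finitely many unit squares with each right edge glued to a left edge and each top edge to a bottom edge.

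Then I would verify that the glued surface is $S_g$ and that $\alpha,\beta$ reappear as its horizontal and vertical curves. The maps $h,v$ are genuine permutations of $\{1,\dots,n\}$ because each intersection point has a unique incoming and outgoing arc of each curve; since $\alpha$ is a single simple closed curve meeting all $n$ points cyclically, $h$ is an $n$-cycle, and likewise $v$, so the horizontal midlines reassemble into $\alpha$ and the vertical midlines into $\beta$, and the surface is connected. The complement of $\alpha\cup\beta$ in the square complex is precisely the union of corner-quadrants, which glue into one region per identified corner; since $(\alpha,\beta)$ fills, these regions are disks, so the square corners identify to finitely many cone points, one per complementary disk, and the result is a closed surface $\Sigma$. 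An Euler-characteristic count confirms consistency: the curve cell structure has $V=n$, $E=2n$, and $F=n+2-2g$ complementary disks, while the square complex has $n$ faces, $2n$ edges, and $F$ vertices, giving $\chi(\Sigma)=F-2n+n=2-2g$; since all gluings are translations, $\Sigma$ is oriented, whence $\Sigma\cong S_g$ and the correspondence is natural and reversible.

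The main obstacle is pinning down the second step: checking that a positive crossing genuinely forces the local right-to-left and bottom-to-top translation gluing, and that the convention can be propagated globally without contradiction. This is exactly where coherence is indispensable, for at a negative crossing the same bookkeeping would demand identifying a right edge with a right edge---equivalently, inserting an orientation-reversing or $\tfrac{\pi}{2}$-rotational gluing---and one would obtain a half-translation surface rather than an origami. I expect the remaining points (that $h,v$ are $n$-cycles, the bijection between cone points and complementary disks, and the Euler-characteristic bookkeeping) to be routine once the orientation conventions are fixed.
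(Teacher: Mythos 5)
Your proposal is correct and follows essentially the same approach as the paper: one unit square centered at each intersection point (the paper realizes these as dual cells of the $\alpha\cup\beta$ cell decomposition), with coherence forcing all crossings to have the same sign so that the left/right and top/bottom labels are globally consistent and the gluings are translations. The only difference is expository---the paper's dual-cell viewpoint makes the squares tile $S_g$ by construction, whereas you build the square complex abstractly and then verify via connectivity, orientability, and Euler characteristic that it recovers $S_g$.
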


Accordingly, we will call a coherent filling pair an \emph{origami pair of curves}.  Similarly, the two associated vertices in $\ns$
correspond to an \emph{origami pair of vertices}.

\begin{Def}
A sequence of vertices, $ \mathcal{E} = \{v_0 , \cdots , v_n \}$, in $\ns$ is an \emph{origami edge-path of length $n$} if:
\begin{itemize}
\item[1.] $ |v_i \cap v_{i+1}| = 1$ for all $0 \leq i \leq n-1$;
\item[2.] any pair $(v_i , v_j)$, $|i-j|>0$, is a coherent pair;
\item[3.] and thus, any filling pair $(v_i , v_j)$ is an origami pair for $S_g$.
\end{itemize}
\end{Def}

At times we will discuss origami edge-paths without reference to their lengths.  When we discuss the length of an origami edge-path, $\mathcal{E}$, of unspecified length we will use, $\|\mathcal{E}\|$, to denote its length.

We have the following main result.

\begin{Th}
{\label{origami paths exist}}
For any origami pair $(u,v)$, there exists an origami edge-path $ \mathcal{E} = \{u = v_0 , \cdots , v_n = v \}$.
\end{Th}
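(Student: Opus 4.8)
The plan is to realize every curve in the path as a straight geodesic of the flat origami structure furnished by Theorem \ref{ori}, so that coherence of \emph{all} pairs (condition~2 of the definition) becomes essentially automatic, and then to produce the sequence itself by a Euclidean-algorithm descent on the origami's gluing data that lowers intersection numbers one step at a time.

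First I would fix the origami associated to $(u,v)$ and orient representatives $\alpha,\beta$ so that every point of $\alpha\cap\beta$ is a positive crossing; this is possible precisely because $(u,v)$ is coherent. In the flat metric $\alpha$ is the core of the single horizontal cylinder and $\beta$ the core of the single vertical cylinder, and each square contributes exactly one point of $\alpha\cap\beta$, so that $|u\cap v|=n$ equals the number of squares. The key structural fact to record is that any two distinct flat geodesics of strictly positive slope meet minimally and with all crossings of the \emph{same} local sign: the origami is nonpositively curved (its cone angles are positive integer multiples of $2\pi$), so flat geodesics realize the geometric intersection number, while positive slope forces the crossings to be coherent. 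Consequently, if every vertex $v_i$ of the prospective path is taken to be a positively sloped geodesic, then condition~2 and---via Theorem \ref{ori}---condition~3 hold for free, and only condition~1, that consecutive curves meet exactly once \emph{on $S_g$}, remains to be engineered.

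Next I would interpolate between the horizontal direction of $\alpha$ and the vertical direction of $\beta$ along a Stern--Brocot/Farey sequence of slopes, but with the crucial caveat that on the branched cover $S_g\to\mathbb{T}^2$ the upstairs intersection numbers are \emph{not} those downstairs: on the torus $\alpha$ and $\beta$ already meet once, whereas on $S_g$ they meet $n$ times. For this reason the elementary step cannot be a full Dehn twist about $\alpha$ or $\beta$, since $|u\cap T_\beta(u)|$ is of order $n^2$ on $S_g$; instead the elementary move I would use is a single ``square shift'' that redirects the current geodesic across exactly one square of the cylinder it is traversing. The heart of the argument is then an induction on $n$: I would show that one such shift produces from $\alpha$ a non-separating geodesic $v_1$ with $|u\cap v_1|=1$, positively sloped (hence coherent with both $\alpha$ and $\beta$), for which $(v_1,\beta)$ is again a coherent filling pair whose origami has one fewer square; applying the inductive hypothesis to $(v_1,\beta)$ completes the path. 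Connectivity and the non-separating property of each $v_i$ I would secure by exhibiting the shift as the restriction of an affine/mapping-class move, so that each $v_i$ is the image of a simple closed non-separating curve under a homeomorphism of $S_g$.

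The main obstacle I anticipate is exactly this control of intersection numbers on the cover rather than on the torus: proving that a single square shift lowers $|{\cdot}\cap\beta|$ by precisely one while keeping the pair coherent and filling, and that the resulting direction still defines a \emph{connected} geodesic (a single simple closed curve) on $S_g$. This is where the combinatorics of the two $n$-cycles that glue the origami enters, and where I would either track the cylinder decomposition of each intermediate direction explicitly or argue that the single-cylinder hypotheses on the horizontal and vertical directions propagate, under the descent, to the intermediate directions. Once this reduction step is established, the existence of the origami edge-path follows by induction, with condition~3 inherited at every stage from Theorem \ref{ori}.
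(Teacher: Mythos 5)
Your opening observation is sound and attractive: on the translation surface associated to $(u,v)$, closed flat geodesics in two distinct directions do intersect minimally and with all crossings of the same sign, so a path consisting entirely of such geodesics would satisfy condition~2 for free. The proof breaks down, however, at the point where you must actually produce the path, and the gap is not a technicality. Your elementary move, the ``square shift,'' is not a well-defined operation on flat geodesics: a flat geodesic has a single constant direction, and a curve that is redirected inside one square is no longer a geodesic of the flat metric, so it falls outside the class of curves for which your coherence-for-free argument applies. If instead you insist that $v_1$ be a genuine closed geodesic in some direction $(q,p)$, then $|u\cap v_1|$ is forced to be a positive multiple of $p$ and $|v_1\cap\beta|$ a positive multiple of $q$, and there is no reason a direction exists realizing $|u\cap v_1|=1$ together with $|v_1\cap\beta|=n-1$; nor is it true in general that intermediate rational directions of a $[1,1]$-origami are single-cylinder (most are not), so connectedness of the candidate $v_1$ fails. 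Finally, your induction requires $(v_1,\beta)$ to remain a coherent \emph{filling} pair so that it again carries an origami with one fewer square; the definition of an origami edge-path deliberately does not require intermediate pairs to fill, and indeed they typically do not, so the inductive hypothesis cannot be re-applied in the form you state.

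The paper's proof avoids all of this by working combinatorially with bicorn curves (following Rasmussen and Przytycki--Sisto) rather than with flat geodesics. One takes a minimal subarc $b'\subset\beta$ meeting $\alpha$ only at its endpoints and forms the bicorn $\alpha_1=a'\cup b'$; then one repeatedly extends the $\beta$-arc to the next intersection with the current $\alpha$-arc, producing $\alpha_2,\alpha_3,\dots$ until the sequence terminates at $\beta$. Consecutive bicorns intersect exactly once by construction, each is non-separating by induction, and---this is the point your approach was trying to buy with flat geometry---every pair $(\alpha_i,\alpha_j)$ is automatically coherent simply because each $\alpha_i$ is a union of subarcs carrying the orientations induced from $\alpha$ and $\beta$, and $(\alpha,\beta)$ is coherent. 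In effect, a bicorn curve is exactly the concatenation of a horizontal and a vertical segment in the flat picture; the paper exploits the coherence of such concatenations directly, without needing them to be geodesics, which is what your argument would need to do to close the gap.
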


Our proof of Theorem \ref{origami paths exist} uses a ``bicorn'' construction that allows us to say a neighborhood of any geodesic of width $14$ will
contain an origami edge-path.  (See Proposition \ref{14-bounded}.)  Additionally, such origami edge-paths can be arbitrarily long.

\begin{Th}
{\label{infinite diameter}}
Let $N >0$ be any positive integer.  Then there exists an origami pair, $(u,v)$, for $S_g$ such that $ \dns (u,v) > N$.
In particular, for a fixed $u \in \ns$, the set of $ v \in \ns$ such that $(u,v)$ is an origami pair has infinite diameter in $\ns$.
\end{Th}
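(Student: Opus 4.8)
The plan is to produce the required pairs from the flat (translation) geometry of a single origami, exploiting the fact that coherence is automatic for straight-line representatives while distance is supplied by a pseudo-Anosov affine automorphism. Fix an origami $X$ on $S_g$ (one exists for every $g\ge 2$, e.g.\ the origami associated by Theorem~\ref{ori} to any origami pair), and let $\alpha$ be the core curve of a non-separating cylinder of $X$ in some periodic direction. The Veech group of a square-tiled surface is a finite-index subgroup of $SL(2,\mathbb{Z})$, hence non-elementary and containing a hyperbolic element; by the Thurston--Veech construction this element is realised by an affine pseudo-Anosov homeomorphism $\psi$ of $X$ with $D\psi=A$ hyperbolic. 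I would then set $u:=[\alpha]$ and $v_n:=[\psi^n\alpha]$ and show that, for all large $n$, the pair $(u,v_n)$ is an origami pair with $\dns(u,v_n)>N$.

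Coherence is the conceptual heart, and it is where the flat structure does the work. Since $\psi$ is affine, $\psi^n\alpha$ is again a simple closed geodesic, namely the core of a cylinder in the direction $A^n\theta_\alpha$, where $\theta_\alpha$ is the (rational) direction of $\alpha$. As $A$ is hyperbolic with irrational eigendirections, $A^n\theta_\alpha\neq\theta_\alpha$ modulo $\pi$ for every $n\neq 0$, so $\alpha$ and $\psi^n\alpha$ are distinct straight-line geodesics of constant tangent direction. Two such geodesics meet minimally (closed geodesics in the nonpositively curved flat metric realise geometric intersection number, forming no bigons), and at every crossing the oriented tangent frame is the fixed pair $\big((\cos\theta_\alpha,\sin\theta_\alpha),(\cos A^n\theta_\alpha,\sin A^n\theta_\alpha)\big)$; hence every intersection carries the same sign. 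Thus $|\hat i|=i$ and $(u,v_n)$ is coherent for every $n\neq 0$, with no appeal to homology.

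For filling I would use that filling is an open condition in $\mathcal{PML}(S_g)\times\mathcal{PML}(S_g)$: the locus of pairs admitting a common zero of $c\mapsto i(c,\cdot)$ is the image of a closed set under projection off a compact factor, hence closed. Because $\psi$ is pseudo-Anosov, $\psi^n\alpha\to[\mathcal F^u]$ in $\mathcal{PML}$, where the unstable lamination $\mathcal F^u$ is filling; consequently $\{\alpha,\mathcal F^u\}$ lies in the open filling locus, and therefore so does $\{\alpha,\psi^n\alpha\}$ for all large $n$. This supplies the filling hypothesis, completing the verification that $(u,v_n)$ is an origami pair. For the distance, note that $\ns$ is the full subgraph of the curve graph $\mathcal{C}(S_g)$ on non-separating vertices, so $\dns\ge d_{\mathcal{C}}$; by Masur--Minsky a pseudo-Anosov has positive asymptotic translation length on $\mathcal{C}(S_g)$, whence $\dns(u,v_n)\ge d_{\mathcal{C}}(\alpha,\psi^n\alpha)\to\infty$. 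Choosing $n$ large yields $\dns(u,v_n)>N$, and the whole family $\{(u,v_n)\}_{n\ge n_0}$ exhibits infinite diameter for the fixed vertex $u$; transitivity of the mapping class group on non-separating curves then promotes this to every $u\in\ns$.

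The step I expect to be the main obstacle is not coherence (which the flat geometry makes transparent) but controlling filling uniformly in $n$: a single simple closed curve never fills, so one cannot argue curve-by-curve, and the openness-of-filling argument combined with $\mathcal{PML}$-convergence to the filling lamination $\mathcal F^u$ is the cleanest route I see to the uniform conclusion. A secondary technical point is ensuring the chosen origami genuinely carries a non-separating cylinder core together with a hyperbolic Veech element; this is routine for explicit square-tiled surfaces but should be pinned down.
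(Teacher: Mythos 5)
Your proposal is correct, and it shares the paper's overall skeleton---produce the far-away coherent partner by iterating a pseudo-Anosov on a fixed curve and invoke Masur--Minsky's linear lower bound on curve-graph translation length (Theorem \ref{MM_pS})---but it diverges at the crucial step, namely why coherence survives the iteration. The paper (Proposition \ref{large}) takes an explicit origami pair $(\alpha,\gamma)$, forms $h=T_{\alpha}\circ T_{\gamma}^{-1}$, which is pseudo-Anosov by Thurston's construction (Theorem \ref{Thurston}), and checks by a hands-on curve-surgery picture of the Dehn twist that $h^{n}(\gamma)$ remains coherent with $\alpha$ and $\gamma$, arguing by induction on $n$. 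You instead pass to the flat structure, take $\psi$ affine with hyperbolic derivative, and observe that $\alpha$ and $\psi^{n}\alpha$ are constant-direction geodesics in distinct directions, hence in minimal position (no bigons in the locally CAT(0) flat metric) with every intersection of the same sign. Your mechanism is arguably cleaner and more general---it shows in one stroke that any two cylinder core curves in distinct periodic directions on an origami form a coherent pair, with no induction---while the paper's is more elementary and self-contained, requiring no Veech-group input; your worry about producing a hyperbolic affine element and a non-separating cylinder core is answered by the Thurston--Veech construction applied to the horizontal and vertical cylinder decompositions of the $[1,1]$-origami furnished by Theorem \ref{ori}, whose core curves are the non-separating curves $\alpha$ and $\gamma$ themselves, so this is essentially the paper's $h$ viewed geometrically. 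Two small remarks: your $\mathcal{PML}$-openness argument for filling, while correct, is unnecessary, since a non-filling pair of curves has distance at most $2$ in $\mathcal{C}^{1}(S_g)$ and filling is therefore automatic once $d_{\mathcal{C}(S_g)}(\alpha,\psi^{n}\alpha)>2$ (the paper leaves this point implicit); and your inequality $\dns \geq d_{\mathcal{C}(S_g)}$ on non-separating vertices is the easy direction of the quasi-isometry the paper invokes and is all that is needed for the lower bound.
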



\subsection{Interesting examples.}
In the literature, there are ready examples of origami pairs of curves for all genera.  For Euler characteristic reasons, the theoretical minimal number of intersections
for a filling pair of $S_g$ is $2g-1$.  For $g=2$ this minima is not geometrically realizable and $4$ is the minimal number of intersections needed for filling.
However, Aougab and Huang showed in \cite{AH} that for all $g >2$ there exists filling pairs of curves whose intersection achieves the $2g-1$ minima. Moreover, in \cite{AMN, CM, Jef} constructions are given for origami pairs of curves that achieve the $2g-1$ minima, for all $g>2$. 
In \S \ref{examples} we briefly describe such examples and their associated origami edge-paths---all are of length 2.

For the $g=2$ case we consider an example of Hempel \cite{Sc}.  The origami filling pair in $S_2$ is distance $4$ in $\ns$ and the length of the origami
edge-path coming from the construction in the proof of Theorem \ref{origami paths exist} is also of length $4$.


\subsection{Background on the curve graph and origamis.}
The \emph{curve complex} of a closed oriented surface $S_g$, $\mathcal{C} (S_g)$, is a simplicial flag complex such that each vertex is represented by the isotopy class of an essential simple closed curve and two vertices are connected by an edge if they have disjoint representatives up to isotopy. It was introduced by Harvey \cite{Harvey} to study the mapping class group of the surface. The 1-skeleton of the curve complex, $\mathcal{C}^1(S_g) \subset \mathcal{C} (S_g)$, is called the \emph{curve graph}.  If each edge is endowed with length one, the curve graph is a metric space. A metric space is called \emph{$\delta$-hyperbolic} if for each geodesic triangle, each side lies in a $\delta$-neighborhood of the other two sides for some $\delta \geq 0$. Masur and Minsky's seminal result in \cite{MM1}
showed that the curve graph is $\delta$-hyperbolic.  The non-separating curve graph $\ns \subset \mathcal{C}^1(S_g)$ as a subgraph, is also $\delta$-hyperbolic \cite{Ha, MS}.  The reason is that the natural inclusion is a quasi-isometry. Similar to 
$\mathcal{C}^1(S_g)$, for $\ns$ the constant $\delta$ can be chosen independent of the genus of the surface---that is we have \emph{uniform hyperbolicity}.

\begin{Th}\emph{(Rasmussen \cite{Ra-1} Theorem 1.1)}\label{Ra}
The non-separating curve graph $\ns$  is connected and uniformly $\delta$-hyperbolic with infinite diameter. 
\end{Th}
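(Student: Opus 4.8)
The plan is to prove the three assertions—connectedness, uniform $\delta$-hyperbolicity, and infinite diameter—largely separately, with the hyperbolicity statement carrying essentially all of the difficulty. For connectedness I would induct on the geometric intersection number $|u\cap v|$. If $|u\cap v|=0$ the vertices are adjacent; otherwise, given minimal representatives $\alpha,\beta$ with $|\alpha\cap\beta|\ge 1$, I would surger $\alpha$ along a subarc $b\subset\beta$ whose interior misses $\alpha$ and whose endpoints lie on $\alpha$. This splits $\alpha$ into two arcs $a_1,a_2$ and produces the two simple closed curves $a_1\cup b$ and $a_2\cup b$, each meeting $\beta$ fewer times than $\alpha$ does. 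A standard homological parity count shows that at least one of these surgeries is non-separating, hence represents a vertex of $\ns$ strictly closer to $v$; iterating drives the intersection number to $0$.

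This surgery is exactly the \emph{bicorn} operation that also drives the hyperbolicity proof, so I would set up that machinery once. For oriented non-separating representatives meeting minimally, a \emph{bicorn} is a simple closed curve obtained by concatenating one arc of $\alpha$ and one arc of $\beta$ sharing their endpoints; I would retain only the essential, non-separating bicorns and order them by the natural nesting of the participating subarcs to obtain a finite \emph{bicorn path} $P(u,v)\subset\ns$ joining $u$ to $v$. Two local features must then be verified with constants independent of the genus: consecutive bicorns along $P(u,v)$ have universally bounded intersection number (hence lie a bounded distance apart in $\ns$), and when $|u\cap v|\le 1$ the path $P(u,v)$ has uniformly bounded diameter.

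The heart of the matter is a \emph{thin-triangle} estimate for these paths, which I would establish in the form required by the Bowditch--Hamenstädt ``guessing geodesics'' criterion (as used by Hensel--Przytycki--Webb for unicorn paths): for any three vertices $x,y,z$, every bicorn on $P(x,y)$ lies within a universal distance of $P(x,z)\cup P(z,y)$. The mechanism is that any arc of $x$ appearing in a bicorn of $P(x,y)$, once subdivided at its intersections with $z$, must contain a subarc that is itself a building block of a bicorn on $P(x,z)$ or on $P(z,y)$; pairing arcs of $x$ with arcs of $y$ against $z$ exhibits a bicorn common to the two sides up to bounded intersection. Granting this, the criterion yields that $\ns$ is $\delta$-hyperbolic with $\delta$ depending only on the universal constants above, never on $g$, which is precisely the uniformity claim; compatibility with \cite{Ha, MS} follows because the inclusion $\ns\hookrightarrow\mathcal{C}^1(S_g)$ is a quasi-isometry. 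I expect this arc-matching estimate, and in particular controlling it uniformly in the genus, to be the principal obstacle, since the combinatorics of how the bicorn arcs of the three sides overlap is where all the care is needed.

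Finally, for infinite diameter I would fix a pseudo-Anosov mapping class $\phi$ of $S_g$: its action on $\ns$ is a quasi-isometry with positive asymptotic translation length, so the orbit $\{\phi^n(u)\}$ is unbounded and $\dns$ takes arbitrarily large values. Alternatively, combining the quasi-isometry $\ns\hookrightarrow\mathcal{C}^1(S_g)$ with the intersection-number lower bound for distance in the curve graph \cite{MM1} reduces the claim to the standard infinite diameter of $\mathcal{C}^1(S_g)$.
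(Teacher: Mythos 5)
This theorem is not proved in the paper at all: it is quoted verbatim from Rasmussen \cite{Ra-1}, and the surrounding text only remarks that ``bicorn technology is used in the proof'' and that Rasmussen's version of $\ns$ additionally has edges between curves intersecting at most twice. So there is no in-paper proof to compare against; what you have written is a sketch of how you would reprove the cited result.

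As a sketch it does identify the correct strategy --- it is essentially the architecture of Rasmussen's actual argument (bicorn paths between non-separating curves, plus the Bowditch/Masur--Schleimer ``guessing geodesics'' criterion in the style of Hensel--Przytycki--Webb), and your connectedness and infinite-diameter arguments are standard and sound (the $\mathbb{Z}/2$-homology count $[a_1\cup b]+[a_2\cup b]=[\alpha]\neq 0$ does guarantee a non-separating surgery, and the pseudo-Anosov orbit argument is exactly how the present paper later proves its own Theorem \ref{infinite diameter}). But the load-bearing step --- the uniform thin-triangle estimate showing every bicorn of $P(x,y)$ lies within a universal distance of $P(x,z)\cup P(z,y)$ --- is only described, and you yourself flag it as the principal obstacle. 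Two further points deserve care if you were to complete this: (i) consecutive bicorns between non-separating curves need not be disjoint, which is precisely why Rasmussen enlarges the edge set to curves intersecting at most twice and then transfers hyperbolicity across the resulting quasi-isometry; your phrase ``universally bounded intersection number (hence bounded distance)'' quietly assumes a translation-length or quasi-isometry argument that should be made explicit; and (ii) one must check that the surgered/bicorn curves can always be chosen non-separating, which again requires a homological selection argument at each stage, not just at the endpoints. So: right road map, but the proof is not complete as written, and in the context of this paper the correct move is simply to cite \cite{Ra-1}.
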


The definition of the non-separating curve graph in the theorem differs from the one defined previously.  Rasmussen's add-in is the edges between pairs of curves that intersect at most twice.
As such, a case may be made that such an enhanced $\ns$ is the more natural setting to consider origami edge-paths, since a shortest origami edge-path might naturally
be a geodesic in the Rasmussen-type non-separating graph of curves.

Bicorn technology is used in the proof of Theorem \ref{Ra}.  The reader may also see \cite{Ra-2} for further development of the bicorn technology.

Origamis are of significant interest in Teichm\"uller theory.  (See \cite{DGZZ, Jef, Herr, CMc, Sn}.)  Specifically, an origami determines a Riemann surface along with a translation structure.
The translation structure can be varied in a natural way resulting in a complex one-parameter family of Riemann surfaces.
One can identify this parameter space with the hyperbolic upper half plane, $\mathbb{H}^2$, in such a manner that the Riemann surfaces inherits a natural marking.
Thus, an origami corresponds to a map of $\mathbb{H}^2$ into a Teichm\"uller space.
Moreover, the map is an isometric and holomorphic embedding of $\mathbb{H}^2$---commonly
known as a \emph{Teichm\"uller disc}.

The combinatorial structure of origamis specific to a coherent filling pair of curves---a $[1,1]$-origami---is a current area of investigation and
we recommend the excellent discussion in the introduction of \cite{Jef}.

\subsection{Conjecture for distance calculations.}
\label{conjectures}
Since the seminal work of Masur and Minsky \cite{MM1, MM2}, understanding filling pairs and their behavior with respect to intersection number and distance in the curve graph 
has been the focus of considerable research interest.  Utilizing the Masur-Minsky tight-geodesic technology, the work of Bell and Webb \cite{BW, Webb} describes a polynomial-time algorithm for computing distance in the curve complex.  In \cite{BMW}, efficient geodesics are introduced and an implementation of a distance algorithm for short distances based upon efficient geodesics is
given in \cite{GMMM, soft}.  Additionally, in \cite{JM} the initial intersection bounds with reference arcs that are given for any curve representing a vertex of a geodesic are substantially
improved so that these bounds are independent of distance---\emph{super efficiency}---and only dependent on genus.  Still such distance calculations are
still arduous and one might hope that there would be a subclass of filling pairs that such calculations are readily made.
With the added feature of having a filling pair being coherent, we propose a series of conjectures for calculating ``distance'' for origami pairs of curves.

More specially, it is natural to consider the minimal length of origami edge-paths associated with an origami pair of curves.
That is, let $${\bf E}(u,v) := min \{ \  \|\mathcal{E}\| \  : \ \mathcal{E} {\rm \ is \ an \ origami \ edge\mbox{-}path \ for\ origami \ pair} (u,v)\}$$
be the \emph{origami length of} $(u,v)$.

\begin{Rem}\label{non-transitivity}
The origami length defined above is not a real distance function, as being coherent is not a transitive property between curves.
\end{Rem}

Our bicorn-construction of an origami edge-path gives us that they are within a $14$-neighborhood of a geodesic.
Our first conjecture proposes that origami edge-paths of length ${\bf E}(u,v)$ are well behaved with respect to distance.

\begin{Conj}
\label{Conj1}
Shortest origami edge-paths---that is, having length ${\bf E}(u,v)$---are quasi-geodesics.
\end{Conj}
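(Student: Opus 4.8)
The plan is to produce universal constants $K \ge 1$ and $C \ge 0$ (genus-independent, by the uniform hyperbolicity of Theorem~\ref{Ra}) so that any shortest origami edge-path $\mathcal{E} = \{v_0, \dots, v_n\}$ for $(u,v)$ satisfies $\frac{1}{K}|i-j| - C \le \dns(v_i,v_j) \le K|i-j| + C$ for all $0 \le i, j \le n$. The upper inequality is essentially free: since $|v_i \cap v_{i+1}| = 1$, consecutive vertices are a bounded distance apart in $\ns$ (distance $2$ in the graph defined in \S\ref{main results}, distance $1$ in Rasmussen's enhancement), so $\dns(v_i,v_j) \le 2|i-j|$. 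Thus the entire content of the conjecture is the lower inequality $|i-j| \le K\,\dns(v_i,v_j) + KC$; that is, a shortest origami edge-path must not be much longer than the $\ns$-distance it realizes, both globally and on every sub-segment.

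First I would establish a two-sided comparison between the origami-length functional and $\dns$, namely $\frac{1}{2}\dns(u,v) \le {\bf E}(u,v) \le K_0\,\dns(u,v) + C_0$. The left-hand estimate is immediate from the bounded step size above. For the right-hand estimate I would return to the bicorn construction behind Theorem~\ref{origami paths exist}: Proposition~\ref{14-bounded} already places a bicorn-built origami edge-path inside the $14$-neighborhood of a geodesic $[u,v]$, and the remaining point is to show that the construction can be arranged to progress \emph{monotonically} along that geodesic, so that its length is bounded linearly in $\dns(u,v)$. This is the analogue for bicorn paths of the Hensel--Przytycki--Webb estimate that unicorn paths are unparametrized quasi-geodesics, and I would adapt their surgery-and-slimness bookkeeping to the coherent setting, using that coherence is preserved under the bicorn surgeries appearing in the proof of Theorem~\ref{origami paths exist}.

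To pass from the endpoints to all pairs $(v_i,v_j)$, I would prove that bicorn-built origami edge-paths are uniformly \emph{slim}: every vertex lies within a universal distance $R$ of every geodesic joining the same endpoints. Combining slimness with the comparison ${\bf E} \asymp \dns$ and the stability of quasi-geodesics in the $\delta$-hyperbolic space $\ns$, one obtains via a local-to-global argument that the vertex map $v_i \mapsto v_i$ is a quasi-isometric embedding of $\{0, \dots, n\}$ into $\ns$.

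The hard part, and the reason this is stated as a conjecture rather than a theorem, is the interaction of minimality with the failure of transitivity recorded in Remark~\ref{non-transitivity}. A sub-segment $\{v_i, \dots, v_j\}$ of an origami edge-path is again an origami edge-path for $(v_i,v_j)$, so one is tempted to argue, as for geodesics, that it is itself shortest and then invoke ${\bf E}(v_i,v_j) \le K_0\dns(v_i,v_j)+C_0$. But coherence is not transitive, so one cannot splice a shorter $(v_i,v_j)$-path into $\mathcal{E}$ without risking the destruction of coherence with the outer vertices $v_0,\dots,v_{i-1}$ and $v_{j+1},\dots,v_n$; hence the standard ``sub-paths of shortest paths are shortest'' reduction is unavailable. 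Overcoming this is the crux: the efficiency of sub-segments must be extracted from a fellow-traveling property intrinsic to the bicorn structure rather than from a cut-and-paste minimality argument, and making that intrinsic slimness robust against non-transitivity is where I expect the real difficulty to lie.
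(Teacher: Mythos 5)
The statement you are addressing is posed in the paper as Conjecture \ref{Conj1}; the authors supply no proof, only the surrounding heuristics and the fact that bicorn-built origami edge-paths lie in a $14$-neighborhood of a geodesic (Proposition \ref{14-bounded}). So there is no argument in the paper to compare yours against, and the only question is whether your proposal closes the conjecture. It does not, and you say so yourself in your final paragraph. Beyond the crux you identify, two further gaps are worth naming. First, your entire toolkit---the $14$-neighborhood bound, slimness of bicorn paths, preservation of coherence under the bicorn surgeries---applies to the \emph{specific} origami edge-paths produced by the construction in Proposition \ref{intersection1}; but a \emph{shortest} origami edge-path realizing ${\bf E}(u,v)$ need not be a bicorn path, nor need it fellow-travel one, so none of these estimates transfer to the object the conjecture is actually about. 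Second, even for bicorn paths, containment in a bounded neighborhood of a geodesic does not yield the lower quasi-geodesic inequality: a path can remain within distance $14$ of $[u,v]$ and still backtrack arbitrarily, so its length need not be linearly bounded by $\dns(u,v)$. Your proposed fix---that the construction ``can be arranged to progress monotonically along the geodesic,'' in analogy with Hensel--Przytycki--Webb---is essentially the assertion that bicorn paths are unparametrized quasi-geodesics, i.e., a form of the conclusion; the surgery bookkeeping that would establish it in the coherent setting is not carried out.

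The deepest gap is the one you flag yourself: because coherence is not transitive (Remark \ref{non-transitivity}), a sub-segment $\{v_i,\dots,v_j\}$ of a shortest origami edge-path need not be shortest for its own endpoints, so the standard splicing argument that upgrades an endpoint comparison ${\bf E}(u,v)\asymp\dns(u,v)$ to the quasi-isometric-embedding inequality on all pairs $(v_i,v_j)$ is unavailable. Identifying this obstruction is genuinely useful, but a proof must overcome it rather than record it. As written, your text is a research plan that correctly locates the difficulties; the conjecture remains open.
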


Moreover, we propose a test for when any origami edge-path is a quasi-geodesic.  
\begin{Conj}
\label{Conj2}
For an origami edge-path, $\{ \alpha = \alpha_0 , \cdots , \alpha_{n>2} = \beta \}$, if the following relationship for intersection quotients holds:
$$\frac{| \beta \cap \alpha_1 |}{| \alpha \cap \alpha_1 |} > \frac{| \beta \cap \alpha_2 |}{| \alpha \cap \alpha_2 |} > \cdots > \frac{| \beta \cap \alpha_n |}{| \alpha \cap \alpha_n |},$$
then the edge-path is a quasi-geodesic.
\end{Conj}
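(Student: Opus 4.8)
The plan is to reduce the quasi-geodesic property to its two defining inequalities and to notice that the upper bound is essentially free. Since consecutive vertices of an origami edge-path intersect exactly once, any two curves meeting once lie a uniformly bounded distance apart in $\ns$ (distance one in the Rasmussen enhancement of Theorem \ref{Ra}, and at most $2$ in the disjointness model), so the path has uniformly bounded step size and $\dns(v_i,v_j)\le C|i-j|$ holds automatically. All the content therefore lies in the lower bound $\dns(\alpha_i,\alpha_j)\ge \tfrac{1}{K}|i-j|-C$, that is, in showing the path makes definite, non-backtracking progress; the monotone decrease of the intersection quotients is exactly the combinatorial avatar of that progress.

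To turn the quotient monotonicity into metric progress I would introduce the \emph{progress function}
$$f(\gamma)\ :=\ \log\frac{|\alpha\cap\gamma|}{|\beta\cap\gamma|},$$
so that the hypothesis $\tfrac{|\beta\cap\alpha_1|}{|\alpha\cap\alpha_1|}>\cdots>\tfrac{|\beta\cap\alpha_n|}{|\alpha\cap\alpha_n|}$ says precisely that $f(\alpha_i)$ is strictly increasing along the edge-path. If one can establish (i) that $f$ increases by at least a uniform amount $\epsilon>0$ at each step and (ii) that $f$ is coarsely Lipschitz with respect to $\dns$, say $|f(x)-f(y)|\le L\,\dns(x,y)+c$, then the lower bound is immediate: $\dns(\alpha_i,\alpha_j)\ge \tfrac{1}{L}\big(f(\alpha_j)-f(\alpha_i)\big)-\tfrac{c}{L}\ge \tfrac{\epsilon}{L}|i-j|-\tfrac{c}{L}$. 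The coherence (origami) hypothesis on \emph{every} pair $(\alpha_i,\alpha_j)$ should be used to control the geometry of the intersection pattern and, in particular, to promote the strict inequalities of (i) into uniform multiplicative gaps.

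The difficult part — and the reason this remains a conjecture — is step (ii). Raw intersection numbers are not coarsely Lipschitz in $\dns$: two disjoint non-separating curves can meet a fixed third curve in wildly different numbers of points, so $|\alpha\cap\gamma|$ and $|\beta\cap\gamma|$ can each jump arbitrarily under a single edge-move. The bet is that the \emph{ratio} is tame even when numerator and denominator are not, and the natural way to tame it is geometric rather than combinatorial. I would fix a bicorn path $\mathcal{B}$ from $\alpha$ to $\beta$ (a uniform quasi-geodesic by the bicorn technology underlying Theorem \ref{Ra} and \cite{Ra-2}), use $\delta$-hyperbolicity to obtain a coarsely Lipschitz nearest-point projection $\pi_{\mathcal{B}}$, and then show that $f(\gamma)$ agrees, up to a genus-dependent additive error, with the arc-length position of $\pi_{\mathcal{B}}(\gamma)$ along $\mathcal{B}$. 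Because the arc-length parameter pulled back by a nearest-point projection to a quasi-geodesic is itself coarsely Lipschitz in $\dns$, this comparison supplies exactly the control demanded in (ii); combined with (i) and the elementary inequality above, it yields the linear lower bound for every pair $(\alpha_i,\alpha_j)$.

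I expect the principal obstacle to be making the comparison between $f$ and position-along-$\mathcal{B}$ quantitative with constants depending only on the genus $g$. This requires a uniform estimate of the form ``$\dns(x,y)$ small $\Rightarrow |f(x)-f(y)|$ small,'' which in turn demands a logarithmic bound relating the change in the intersection \emph{ratio} to distance — essentially an annular subsurface-projection estimate à la Masur--Minsky \cite{MM2}, applied to the annuli about $\alpha$ and about $\beta$. The remaining technical points are handling the degenerate endpoints, where one of $|\alpha\cap\gamma|$, $|\beta\cap\gamma|$ vanishes and $f$ is $\pm\infty$, and verifying that coherence genuinely forces the definite multiplicative gaps needed in (i). Should the Lipschitz control in (ii) fail to hold uniformly, the fallback would be to weaken the conclusion to an ``eventually quasi-geodesic'' statement or to strengthen the hypothesis to a definite spacing of the quotients.
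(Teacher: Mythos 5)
The statement you are addressing is Conjecture \ref{Conj2}; the paper offers no proof of it, only the heuristic that the quotient $\frac{|\beta\cap\alpha_i|}{|\alpha\cap\alpha_i|}$ is a ``slope'' in the flat structure whose monotone decrease should preclude zig-zagging. Your proposal is a reasonable research plan in that same spirit, but it is not a proof: both of its load-bearing steps are left unestablished, and at least one cannot follow from the stated hypothesis. For step (i), the conjecture assumes only \emph{strict} decrease of the quotients, with no quantitative spacing; since these are ratios of intersection numbers that can grow without bound along the path, consecutive values of your progress function $f$ can differ by arbitrarily small amounts, so no uniform $\epsilon>0$ per step is available. Your closing remark about ``strengthening the hypothesis to a definite spacing of the quotients'' concedes exactly this, which means the argument as designed would prove a different statement than the conjecture. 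Coherence cannot rescue $\epsilon$ either: it constrains the orientations of intersections, not the sizes of the gaps between successive slopes.

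Step (ii) is the deeper gap. The claim that $f(\gamma)=\log\frac{|\alpha\cap\gamma|}{|\beta\cap\gamma|}$ is coarsely Lipschitz in $\dns$, or that it coarsely agrees with the arc-length position of the nearest-point projection of $\gamma$ to a bicorn path, is precisely the content one would need to prove, and it is nowhere argued. The known Masur--Minsky estimates run in the wrong direction: they bound distance \emph{above} by a logarithm of intersection number, so two curves that are close in $\ns$ (even disjoint) can have intersection numbers with $\alpha$ and with $\beta$ differing by unbounded multiplicative factors, and there is no a priori reason the two errors cancel in the ratio. An annular subsurface-projection estimate controls twisting about a single fixed curve, not a global intersection ratio with two transverse curves, so the appeal to \cite{MM2} is a pointer to a hoped-for lemma rather than to an existing one. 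In short, your outline correctly locates the difficulty but closes neither (i) nor (ii); the statement remains a conjecture, and your text should be read as a candidate strategy together with evidence that the hypothesis likely needs strengthening before that strategy can succeed.
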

The reader should observe that when $n = 2$, by the definition of an origami edge-path we have $\frac{| \beta \cap \alpha_1 |}{| \alpha \cap \alpha_1 |} = 1$.

To give some motivation for decreasing intersection quotients, one can think of $\frac{| \beta \cap \alpha_i |}{| \alpha \cap \alpha_i |}$ as the slope of the $\alpha_i$
in the $(\alpha, \beta)$ origami (flat) structure.  (Alternatively, it is the slope coming from the $\alpha$-horizontal and $\beta$-vertical measured foliations.)
Intuitively the sequence is strictly decreasing since we want our $\alpha_i$ curves to ``swing'' from being close to ``parallel'' with $\alpha$ to being close to ``parallel'' with $\beta$.
One would expect that if such a swing from $\alpha$ to $\beta$ corresponds to a quasi-geodesic then the edge-path would not
illustrate any ``zig-zagging'' in a bounded neighborhood of a geodesic.  Having the slope of vertices in the edge-path being strictly decreasing would
eliminate such zig-zagging.
 
This intuitive thinking also points to our last conjecture.
\begin{Conj}
\label{Conj3}
For shortest origami edge-paths, $| \beta \cap \alpha_{i + 1} |$ is minimal over all possible curves $\alpha_{i+1}$ intersecting $\alpha_i$ once.
\end{Conj}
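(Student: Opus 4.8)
Since Conjecture \ref{Conj3} asserts a \emph{necessary} structural feature of shortest origami edge-paths rather than an algorithm, the natural line of attack is an exchange argument built on a single progress measure. Set $f(i) := |\beta \cap \alpha_i|$, so that $f(0) = |\alpha \cap \beta|$ is the coherent intersection number of the origami pair and $f(n) = 0$. The plan is to show that $f$ is a \emph{strictly} monotone potential for origami length: if $\gamma$ and $\gamma'$ each meet $\alpha_i$ exactly once, are coherent with the initial segment $\alpha_0, \dots, \alpha_i$, and satisfy $|\beta \cap \gamma| < |\beta \cap \gamma'|$, then ${\bf E}(\gamma, \beta) < {\bf E}(\gamma', \beta)$. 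Granting this, suppose some $\alpha_{i+1}$ on a shortest path failed to minimize $|\beta \cap \cdot|$ over curves meeting $\alpha_i$ once; replacing it by a competitor $\gamma$ with $|\beta \cap \gamma|$ strictly smaller and appending a shortest tail from $\gamma$ would yield an edge-path of length $(i+1) + {\bf E}(\gamma, \beta) < (i+1) + {\bf E}(\alpha_{i+1}, \beta) = n$, contradicting minimality---provided the tail of a shortest path is again shortest, which must be argued rather than assumed.

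To establish the monotonicity lemma I would pass to the flat structure supplied by Theorem \ref{ori}, in which $\alpha$ is horizontal and $\beta$ vertical, and record each path curve by the pair $\bigl(|\gamma \cap \beta|, |\gamma \cap \alpha|\bigr)$. By the slope interpretation given after Conjecture \ref{Conj2}, this pair behaves like a holonomy vector, the edge condition $|\alpha_i \cap \alpha_{i+1}| = 1$ behaves like unimodularity---Farey-adjacency---and $f(i)$ is precisely its first coordinate. On a once-punctured torus this is literally the Farey graph, where the geodesic toward a fixed slope $\beta$ is read off from a continued-fraction expansion and each intermediate curve carries the smallest $\beta$-intersection among curves Farey-adjacent to its predecessor, driving $|\beta \cap \cdot|$ to zero---exactly the content of the conjecture in that model. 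The real work is to transport this picture to $S_g$ for $g \geq 2$: I would realize the intermediate curves as \emph{bicorns}---one $\alpha$-arc concatenated with one $\beta$-arc---as in the proof of Theorem \ref{origami paths exist}, show that the number of $\alpha$-crossings along the $\alpha$-arc of a bicorn controls $|\beta \cap \gamma|$, and use the resulting ordering of bicorns to realize a descent of $f$ by controlled steps. A lower bound on how much a single edge can reduce $f$ would then convert the continued-fraction count into the strict inequality required above.

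I expect the principal obstacle to be the \emph{non-transitivity of coherence} recorded in Remark \ref{non-transitivity}, which undermines every splicing step. A naive exchange discards $\alpha_{i+1}$ in favor of a closer curve $\gamma$ and reuses the old tail, but an origami edge-path demands that \emph{every} pair $(v_k, v_\ell)$ be coherent, and nothing guarantees that the surviving tail is coherent with $\gamma$; the very claim that the tail of a shortest path is again shortest is not automatic for the same reason. Overcoming this requires showing either that the greedy curve $\gamma$ can be chosen coherent with the entire surviving tail---plausible for bicorns, whose arcs run along $\alpha$ and $\beta$ and therefore meet other bicorns predictably---or that the tail can be rebuilt as a fresh bicorn sequence of no greater length once its initial curve has been pushed toward $\beta$. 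Establishing strict monotonicity while preserving all pairwise coherences is the crux; the remainder reduces to intersection-number bookkeeping in the flat structure together with the classical continued-fraction description of Farey geodesics. As a concrete check, the genus-$2$ Hempel example of \S\ref{examples}, where the bicorn construction already yields a length-$4$ path realizing $\dns = 4$, should let one verify the minimality of each $|\beta \cap \alpha_{i+1}|$ by hand.
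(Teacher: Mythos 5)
The statement you are addressing is Conjecture \ref{Conj3}: the paper offers \emph{no proof} of it, and explicitly presents it (together with Conjectures \ref{Conj1} and \ref{Conj2}) as an open problem motivated by the hope of a greedy algorithm for shortest origami edge-paths. So there is no argument in the paper to compare yours against; the only question is whether your sketch settles the conjecture, and it does not.

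The gap is the one you yourself flag, and it is fatal as the argument stands. Everything rests on the unproved ``monotonicity lemma'' that $|\beta\cap\gamma| < |\beta\cap\gamma'|$ forces ${\bf E}(\gamma,\beta) < {\bf E}(\gamma',\beta)$, and on the splice of the prefix $\alpha_0,\dots,\alpha_i,\gamma$ with a shortest tail from $\gamma$ to $\beta$ producing an origami edge-path of length $(i+1)+{\bf E}(\gamma,\beta)$. Neither step survives Remark \ref{non-transitivity}: the definition of an origami edge-path requires \emph{every} pair $(v_k,v_\ell)$ to be coherent, so the curves of the new tail must each be coherent with each of $\alpha_0,\dots,\alpha_i$, and the competitor $\gamma$ must be coherent with the prefix --- none of which is guaranteed, since coherence is not transitive. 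Because ${\bf E}$ is not a metric, subpath optimality (that the tail of a shortest origami edge-path is itself shortest) is likewise unavailable, as you note. Finally, the Farey/continued-fraction picture is literal only on the torus, where the pair $\bigl(|\gamma\cap\beta|,|\gamma\cap\alpha|\bigr)$ determines the curve; for $g\geq 2$ these two integers are far from determining $\gamma$, and no lower bound on the per-edge decrease of $f$ is established. You have correctly identified the shape a proof might take and the obstruction that blocks it, but the obstruction is not overcome; the statement remains, as in the paper, a conjecture.
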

Thus, we are conjecturing the existence of a \emph{greedy algorithm} for finding a shortest origami edge-path.


\subsection{Outline of paper.}

For completeness, in \S \ref{origamis} we include a proof of Theorem \ref{ori}.  
In \S \ref{bicorns} we prove a proposition which implies Theorem \ref{origami paths exist}.  As previously mentioned, the proof is constructive and
the containment of origami edge-paths within a $14$ width neighborhood of any geodesic is also argued in Proposition \ref{14-bounded}.
In \S \ref{examples} we discuss examples.  In particular, there exist origami pairs of curves for all genera. 
In \S \ref{arbitrary distance} we use the existence of origamis pairs of curves for all genera coupled with a pseudo-Anosov map construction of Thurston's to prove Theorem \ref{infinite diameter}.


\section{$[1,1]$-origamis and proof of Theorem \ref{ori}.}
\label{origamis}
There are a number of equivalent definitions of an origami of genus $g$, $\mathcal{O}_g$, in the literature, and the most prevalent one defines $\mathcal{O}_g$
as a ramified cover of $S_g$ over the standard torus with all ramification points located over a single point of the torus
However, for ease of argument in our proof of Theorem \ref{ori} we will adopt the following definition \cite{Loch, Sn}.

\begin{Def}
An \emph{origami} consists of a finite set of copies of the unit Euclidean square that are glued together observing the following rules:
\begin{itemize}
\item[$\bullet$] Each left edge of a square is identified by a translation with a right edge.
\item[$\bullet$] Each top edge is identified by a translation with a bottom one.
\item[$\bullet$] The closed (topological) surface that one obtains is connected.
\end{itemize}
\end{Def}

By restricting the gluing rules to only the left-edge-to-right-edge (respectively, upper-edge-to-lower-edge) identifications we obtain a collection of embedded \emph{horizontal annuli}
(respectively, \emph{vertical annuli}).
A \emph{$[1,1]$-origami} will be an origami that has exactly one horizontal annulus and one vertical annulus. In particular, we have the following well known result associating a $[1,1]$-origami
to an origami pair of curves.

\begin{Thmn}[\ref{ori}]
A coherent filling pair of curves(origami pair of curves) naturally corresponds to an origami on $S_g$.
\end{Thmn}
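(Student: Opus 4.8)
The plan is to produce the origami directly as a square complex built from the combinatorics of the filling pair, using coherence to guarantee that every gluing is a genuine translation. I would place $\alpha$ and $\beta$ in minimal position and regard $G = \alpha \cup \beta$ as a $4$-valent graph embedded in $S_g$, whose vertex set is the intersection set $\alpha \cap \beta$ (with $n = |\alpha \cap \beta|$ vertices), whose edges are the subarcs of $\alpha$ and of $\beta$ cut off between consecutive intersection points, and whose complementary regions are open discs---this last fact is exactly the filling hypothesis. The idea is that each vertex of $G$ is promoted to one unit square, and the edges of $G$ dictate how the squares are glued.

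First I would fix orientations on $\alpha$ and $\beta$ together with the orientation of $S_g$. At each intersection point the four edge-ends of $G$ split into two $\alpha$-ends and two $\beta$-ends; labelling the outgoing and incoming $\alpha$-ends ``$E$'' and ``$W$'' and the outgoing and incoming $\beta$-ends ``$N$'' and ``$S$'', the \emph{coherence} hypothesis is precisely the statement that the sign of the crossing is the same at every vertex, so that the counterclockwise cyclic order of the four ends is the same frame, say $(E,N,W,S)$, at every vertex. I would record this as the one place where coherence enters: it makes the local horizontal-and-vertical framing globally consistent.

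Next I would assign to each vertex $p$ a unit Euclidean square $Q_p = [-\tfrac{1}{2},\tfrac{1}{2}]^2$, with $p$ at the center, the $\alpha$-direction running horizontally (left to right) and the $\beta$-direction running vertically (bottom to top), so that the $E$, $N$, $W$, $S$ edges of $Q_p$ meet the correspondingly labelled edge-ends of $G$ at $p$. For each edge of $G$ lying on $\alpha$ and joining $p$ to $p'$ I glue the $E$-edge of $Q_p$ to the $W$-edge of $Q_{p'}$ by the horizontal translation matching heights; for each edge on $\beta$ joining $p$ to $p''$ I glue the $N$-edge of $Q_p$ to the $S$-edge of $Q_{p''}$ by the analogous vertical translation. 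Because the framing is uniform, ``up'' is carried to ``up'' and ``right'' to ``right'' under every identification, so each gluing is a translation and the left/right (respectively top/bottom) pattern of the definition is met; all three bullet points of the definition of an origami then hold. Restricting to the left-to-right gluings reassembles the squares along $\alpha$ into a single horizontal annulus (as $\alpha$ is one curve), and the top-to-bottom gluings reassemble them into a single vertical annulus, so the result is in fact a $[1,1]$-origami with core curves $\alpha$ and $\beta$.

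Finally I would check that the underlying surface is $S_g$ itself and is connected: each $Q_p$ models a regular neighborhood of the crossing $p$, the gluings reproduce the regular neighborhood of $G$, and the cone points fill in exactly the complementary discs of $G$, so the square complex is homeomorphic to $S_g$, with connectivity inherited from $S_g$. The step I expect to be the main obstacle, and the heart of the statement, is verifying that the identifications are consistent translations rather than half-translations---equivalently, that the complementary discs produce cone angles that are multiples of $2\pi$. This is exactly where coherence is indispensable: without the uniform framing, following $\alpha$ through an oppositely signed crossing would force a flip $y \mapsto -y$, turning a gluing into a point reflection and yielding only a half-translation structure, which is not an origami in the sense defined here. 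I would therefore isolate the claim ``coherent $\Rightarrow$ every gluing is a translation'' as the key lemma and derive the origami structure, connectivity, and the $[1,1]$-count as formal consequences.
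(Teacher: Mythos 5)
Your proposal is correct and follows essentially the same route as the paper: one unit square per intersection point (your square complex built on the $4$-valent graph $G$ is precisely the paper's dual cellular decomposition of $S_g \setminus (\alpha\cup\beta)$), with coherence supplying the globally consistent left/right and top/bottom framing so that every identification is a translation, and with $\alpha$ and $\beta$ becoming the cores of the single horizontal and vertical annuli of a $[1,1]$-origami. Your explicit isolation of ``coherent $\Rightarrow$ every gluing is a translation rather than a half-translation'' as the key lemma is a nice articulation of the point the paper makes more briefly.
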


\begin{figure}[ht]
\scalebox{.60}{\includegraphics[origin=c, width=1.50 \textwidth,  height = 1.0 \textwidth]{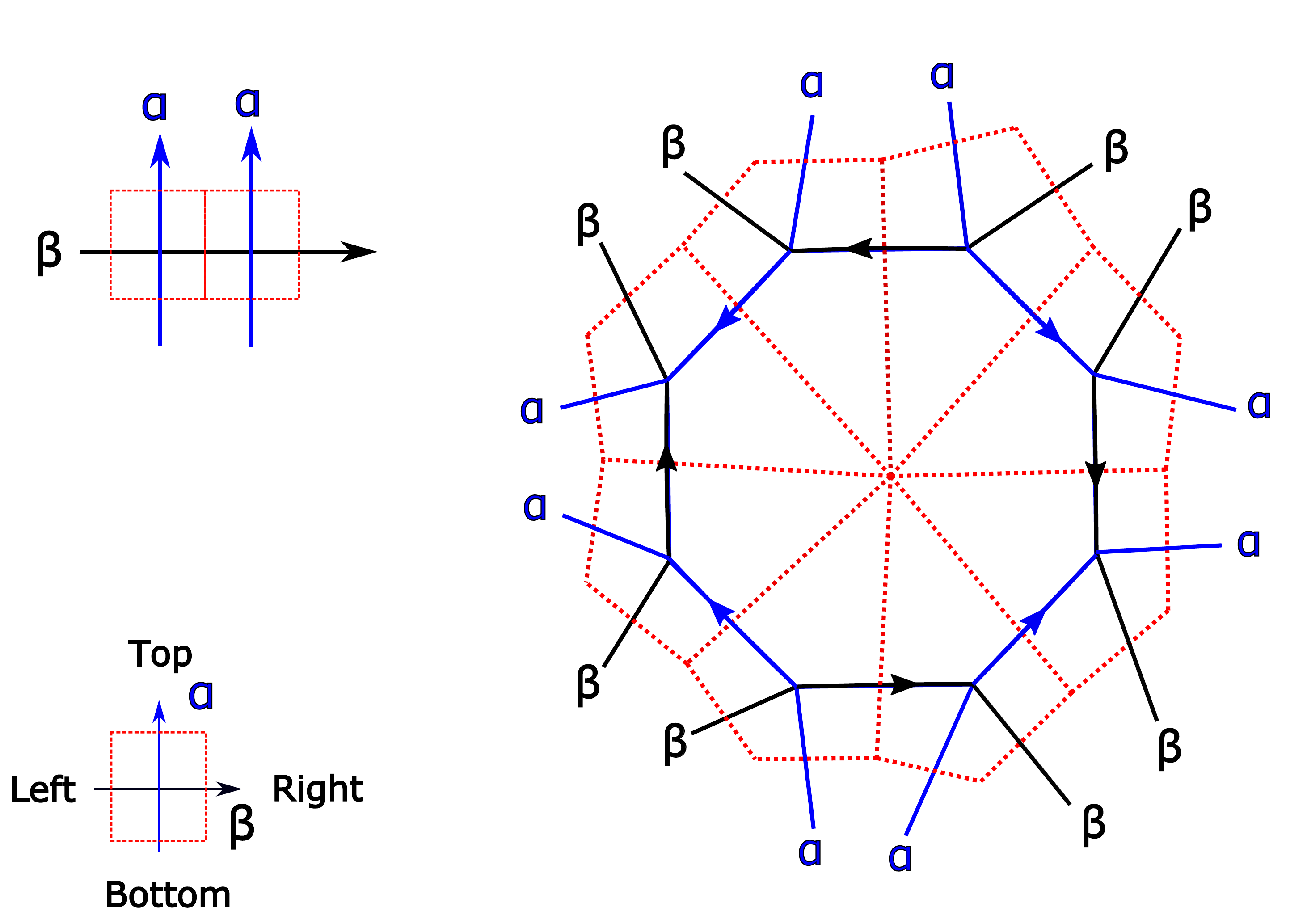}}
\vspace{0.5cm}
\caption{On the left, two unit squares cover two adjacent intersection points along $\beta$, with the center of each square as the intersection point; On the right, 8 unit squares cover the octagon obtained by alternating $\alpha$ and $\beta$-subarcs. The center of each square is the intersection point of $\alpha$ and $\beta$, and one corner point of the 8 squares are  identified as the center of the octagon.}
\label{dual}
\end{figure}

\begin{proof}[Proof of Theorem \ref{ori}]
Let $\alpha$, $\beta$ be an origami pair of curves, then $\alpha\cup\beta$ induces a cellular decomposition on $S_g$.  More precisely, $0$-cells are the intersection points $\alpha\cap\beta$, $1$-cells are the arcs $\alpha\cup\beta-\alpha\cap\beta$, and $2$-cells are $S_g-\alpha\cup\beta$. Note that the $2$-cells are $4n$-gons. This cellular decomposition admits a dual cellular decomposition as follows. The centers of $4n$-gons are the co-vertices, and each arc of $\alpha\cup\beta-\alpha\cap\beta$ is crossed by one co-edge. Each intersection point of $\alpha\cap\beta$ is the center of a co-cell, which is a square as $\alpha$ and $\beta$ pass through it exactly once. 

As a result of the dual cellular decomposition, the surface is covered by squares. The number of squares is equal to the intersection number of $\alpha\cap\beta$. Think of each square as a unit square, see Fig. \ref{dual} for an illustration for the unit squares covering two consecutive intersection points on $\beta$ and an octagon. On the right, the squares covering the octagon are distorted, but they still perfectly cover it.  The center of the octagon is the identification of corners from 8 unit squares. A unit square is bounded by four red dashed segments, each of which is a co-edge of the dual cellular decomposition. 

Since each intersection point of $\alpha$ and $\beta$ is the center of a square, each unit square can be induced with an orientation from that on the intersection point. For example, we can regard the side on the left of $\alpha$ as left, the side on the right of $\alpha$ as right, the side on the left of $\beta$ as top, and the side on the right of $\beta$ as bottom. Since $\alpha$, $\beta$ is a coherent filling pair, traversing along $\beta$, we can observe a left side is glued to a right one. While traversing along $\alpha$, we can see a bottom side is glued to a top one. The intersection number of $\alpha$ and $\beta$
is the same as the number of squares, so traveling along $\alpha$ or $\beta$ exhausts all squares. This induces an $[1,1]$-origami on the surface. In this case, $\alpha$ and $\beta$ are the cores of vertical and horizontal annuli, respectively. 
\end{proof}


\section{Bicorn curves and proof of Theorem \ref{origami paths exist}.}
\label{bicorns}

In this section we prove the statement of Theorem \ref{origami paths exist}, that for any origami pair of curves there is a sequence of curves that are mutually coherent with each other, i.e. the existence an origami edge-path between an origami pair of curves. The proof relies on the notion of bicorn curves introduced by Przytycki and Sisto \cite{PS}. More specifically, the existence of an origami edge-path is established by the bicorn path between the origami pair of curves. Additionally, we prove that the bicorn paths stay in a 14-neighborhood of any geodesic between an origami pair of curves. 

\begin{Def}
Let $\alpha, \beta \subset S_g$ be two simple closed curves that intersect minimally up to isotopy of, say $\alpha$.  A simple closed curve $\gamma$ is a \emph{bicorn curve}
between $\alpha$ and $\beta$ if either $\gamma=\alpha$, $\gamma=\beta$, or $\gamma$
is represented by the union of an arc $\alpha' \subset \alpha$ and an arc $\beta' \subset \beta$, which we call the $\alpha$-arc and the $\beta$-arc of $\gamma$, and $\alpha'$ only intersects $\beta'$ at the endpoints.
If $\gamma = \alpha$, then its $\alpha$-arc is $\alpha$ and its $\beta$-arc is empty, similarly if $\gamma = \beta$, then its $\beta$-arc is $\beta$ and its $\alpha$-arc is empty.
\end{Def}

\begin{figure}[ht]
\scalebox{.40}{\includegraphics[origin=c]{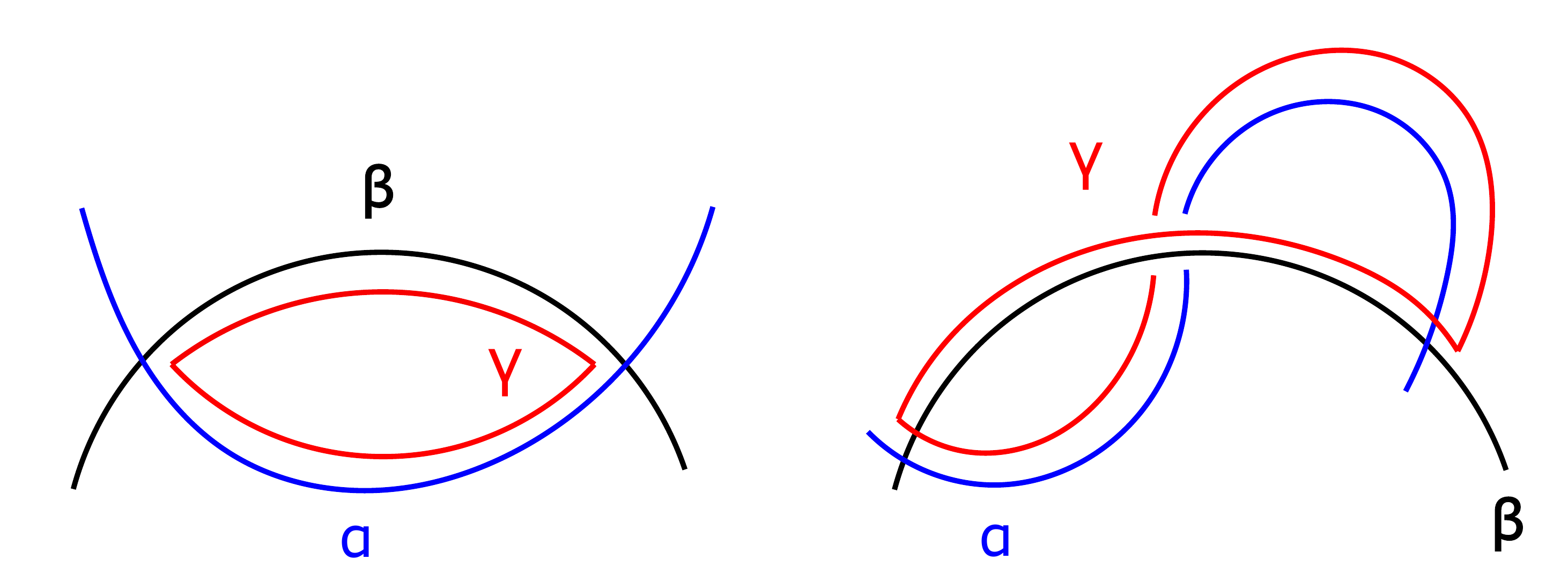}}
\caption{Two configurations of bicorn curves $\gamma$ between $\alpha$ and $\beta$. Only the right configuration occurs for coherent pairs. }
\label{bicorn}
\end{figure}

Based on the orientation of two intersection points, there are two configurations of the bicorn curves illustrated in Fig. \ref{bicorn}. Note that the bicorn curves are essential, as $\alpha$ and $\beta$ intersect minimally. Since the intersection number of $\alpha$ and $\beta$ is finite, then the number of bicorn curves is finite as well.

\begin{figure}[ht]
\scalebox{.50}{\includegraphics[origin=c, width=1.20 \textwidth,  height = 1.0 \textwidth]{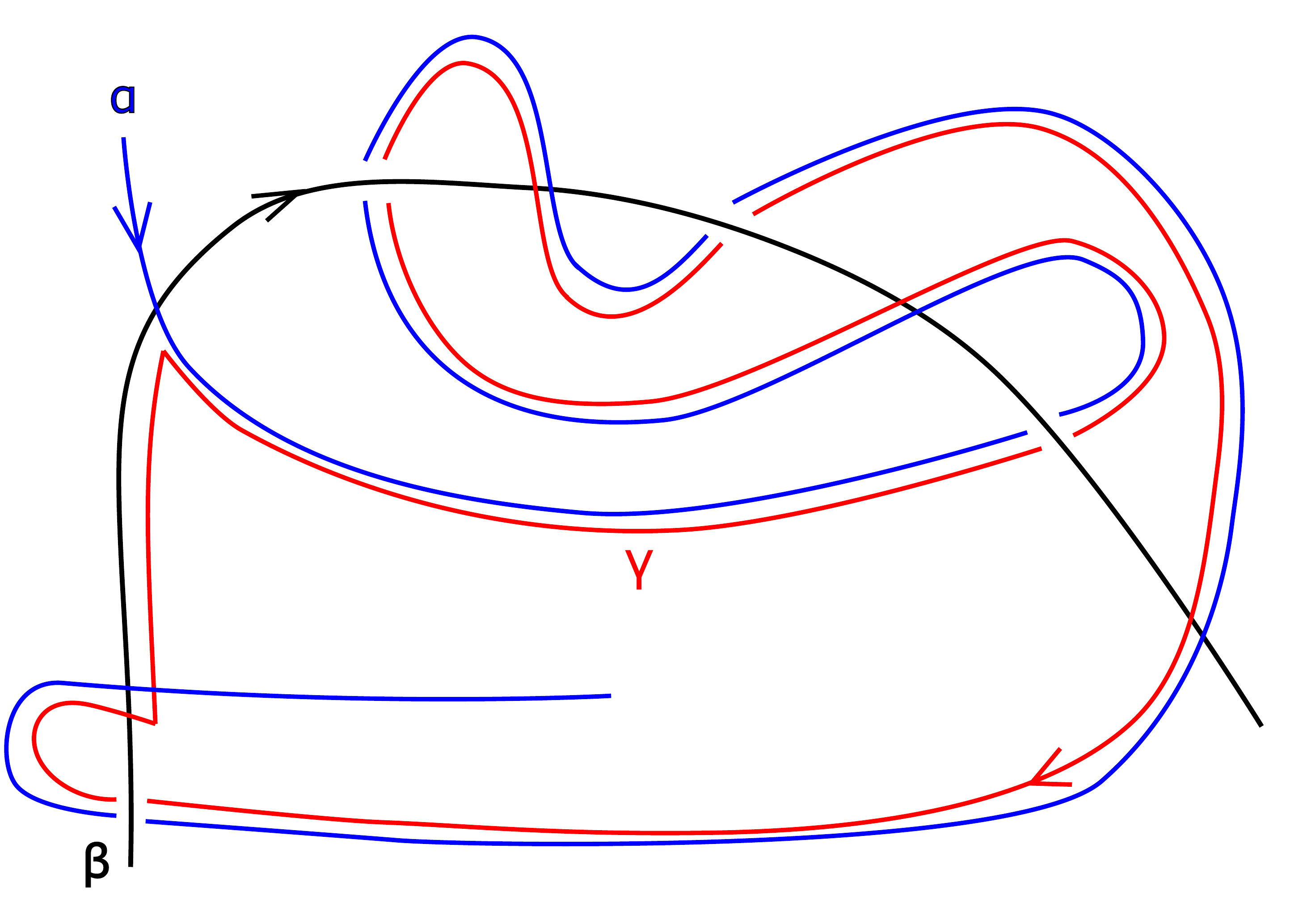}}
\vspace{0cm}
\caption{Bicorn curve $\gamma$ between $\alpha$ and $\beta$ with induced orientation from $\alpha$ and $\beta$.}
\label{bicorn_orientation}
\end{figure}

\begin{Prop}\label{intersection1}
For any two coherent non-separating curves $\alpha$, $\beta$ in the closed oriented surface $S_g$, there exists a sequence of non-separating curves $\alpha_0, \alpha_1, \alpha_2, \cdots, \alpha_n$ such that $|\alpha_i \cap \alpha_{i+1}| = 1$ and $\alpha=\alpha_0$, $\beta=\alpha_n$. Moreover, any two curves $\alpha_i$ and $\alpha_j$ are coherent.
\end{Prop}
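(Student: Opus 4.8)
The plan is to interpolate between $\alpha$ and $\beta$ by a \emph{bicorn path} in the sense of Przytycki--Sisto, and then to upgrade the generic conclusion of that construction (consecutive curves disjoint or meeting once) to the requirements of an origami edge-path by exploiting coherence. First I fix coherent orientations on $\alpha$ and $\beta$, so that every point of $\alpha\cap\beta$ is a positive transverse crossing; this is possible precisely because the absolute value of the algebraic intersection number equals $|\alpha\cap\beta|$. With these orientations fixed, every bicorn $\gamma=\alpha'\cup\beta'$ inherits a canonical orientation, and I would first record as a lemma that the coherence of $(\alpha,\beta)$ forces every bicorn to occur in the ``right'' configuration of Figure \ref{bicorn}: the corner at each of the two endpoints $\partial\alpha'=\partial\beta'$ is the one compatible with the two orientations.

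Next I build the path. Following the Przytycki--Sisto surgery, from a bicorn $\gamma_i=\alpha_i'\cup\beta_i'$ one passes to a strictly simpler bicorn $\gamma_{i+1}$ by cutting at an outermost point of $(\mathrm{int}\,\beta_i')\cap\alpha$, trading a subarc of the $\beta$-arc for a subarc of $\alpha$, so that the $\beta$-arcs along the path are nested and shrink monotonically from $\beta$ down to the empty arc at $\alpha$. The standard feature of this move is that $\gamma_i$ and $\gamma_{i+1}$ are disjoint or meet once. To meet condition (1) I must promote ``at most once'' to ``exactly once''; here I would use that the curves are essential and that, in the coherent nested situation, each elementary surgery alters the curve at a single corner, so that $\gamma_i$ and $\gamma_{i+1}$ cross in exactly the one surgery point (a disjoint pair would force a bigon contradicting minimality). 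The non-separating requirement is then automatic: an embedded curve meeting another embedded curve geometrically once has algebraic intersection $\pm 1$, hence is homologically nontrivial and therefore non-separating; since $\alpha=\gamma_0$ is non-separating, induction along the path keeps every $\gamma_i$ non-separating. Condition (3) requires nothing beyond (2) together with Theorem \ref{ori}.

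The substance of the statement is condition (2): any two curves $\gamma_i,\gamma_j$ of the path are coherent. I would argue this by a signed-crossing analysis. After isotoping into minimal position, every transverse crossing of $\gamma_i$ and $\gamma_j$ occurs at a point of $\alpha\cap\beta$ and is of one of two types---the $\alpha$-arc of $\gamma_i$ across the $\beta$-arc of $\gamma_j$, or the $\beta$-arc of $\gamma_i$ across the $\alpha$-arc of $\gamma_j$---since two subarcs of a single embedded curve never cross transversally. Orienting each bicorn so that its $\alpha$-arc runs along $\alpha$, the common (coherent) configuration forces the induced direction along every $\beta$-arc to agree; a direct determinant computation then shows that the two types of crossing carry \emph{opposite} signs. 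Consequently, writing $\hat\imath$ for the algebraic intersection, coherence of the pair---$|\gamma_i\cap\gamma_j|=|\hat\imath(\gamma_i,\gamma_j)|$---is equivalent to the assertion that, in minimal position, only one of the two types survives.

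This last reduction is where I expect the real difficulty to lie, and it is exactly where the structure of the path, rather than an arbitrary pair of bicorns, is essential. I would show that the monotone nesting of the $\alpha$-arcs (decreasing) and $\beta$-arcs (increasing) along the path prevents the two crossing-types from coexisting without producing a bigon, so that after removing bigons all remaining crossings have a single sign and the displayed arcs already realize the minimal intersection. In other words, everything except ruling out mixed-sign crossings is bookkeeping, and the coherent configuration together with the nested construction is precisely what excludes them; establishing this is the main obstacle of the proof.
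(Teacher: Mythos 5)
Your overall route coincides with the paper's: build a bicorn path between $\alpha$ and $\beta$ with nested $\alpha$- and $\beta$-arcs, use coherence to force the single-crossing (``right'') configuration for consecutive curves, and deduce non-separation from algebraic intersection number $\pm 1$. Those parts are fine. Where you stop short is precisely the content of the ``moreover'' clause: you correctly reduce coherence of a non-adjacent pair $(\gamma_i,\gamma_j)$ to the claim that the two crossing types (the $\alpha$-arc of one against the $\beta$-arc of the other, and vice versa) cannot coexist, and then declare this ``the main obstacle'' without proving it. As written that is a genuine gap, since that reduction is the whole of condition (2).

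The gap closes in one line from ingredients you already have. Write $\gamma_i=a_i\cup b_i$ and $\gamma_j=a_j\cup b_j$; by the monotonicity of the construction one of the two, say $\gamma_j$, has the larger $\beta$-arc and the smaller $\alpha$-arc, so $b_i\subseteq b_j$ and $a_j\subseteq a_i$. A crossing of the second type would be a point of $\mathrm{int}(b_i)\cap \mathrm{int}(a_j)$; since $\mathrm{int}(b_i)\subseteq \mathrm{int}(b_j)$, such a point would lie in $\mathrm{int}(b_j)\cap \mathrm{int}(a_j)$, which is empty because $a_j$ and $b_j$ meet only at their endpoints --- the bicorn condition itself. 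So type-two crossings simply do not exist; no bigon-removal or minimal-position preprocessing is needed. All crossings of $\gamma_i$ with $\gamma_j$ are of the first type, hence carry the common sign of $\alpha\cdot\beta$, which simultaneously shows that the drawn representatives are already in minimal position and that the pair is coherent (the only residual care is a small perturbation at the corner points before counting signs). For comparison, the paper disposes of this step in a single sentence (``because they are the union of the subarcs with induced orientation''); your signed-crossing framework is the more honest account of why that sentence is true --- you just did not notice that the nesting kills the bad crossings outright rather than via a bigon argument.
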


\begin{proof}
The proof proceeds similarly as that in the Claim 3.4 in \cite{Ra-1} used to prove Theorem \ref{Ra}. Assume that two non-separating curves $\alpha$ and $\beta$ are coherent with specified orientations on them. Take a minimal subarc $b' \subset \beta$ that does not intersect $\alpha$ except for the endpoints, and denote the curve $\alpha_1=a' \cup b'$ as the union of the minimal subarc $b' \subset \beta$ and the subarc $a'$ of $\alpha$ determined by the endpoints. See the Fig. \ref{bicorn_orientation}. Since $b'$ is minimal, then $a'$ intersects $b'$ only at the endpoints, then $\alpha_1$ is a bicorn  curve. Note that only the right configuration of Fig. \ref{bicorn} can occur, so $|\alpha \cap \alpha_1| = 1$. The assumption that $\alpha$ is non-separating implies $\alpha_1$ is non-separating. 

Following the orientation of $\beta$, we extend the minimal subarc $b'$ to the next intersection point with $a'$. The extended subarc $b''$ of $\beta$ intersects $a'$ on the endpoint, the bicorn curve is denoted as $\alpha_2=a''\cup b''$. See the Fig. \ref{bicorn_extension}. As we can see, $\alpha_1$ intersects $\alpha_2$ exactly once, then $\alpha_2$ is also non-separating. Next, we extend the subarc $b''$ to the minimal subarc $b'''$ such that $b'''$ intersects $a''$ right on the endpoint, the subarc of $a''$ with bounded by the new intersection point is denoted by $a'''$. The bicorn curve $\alpha_3=a'''\cup b'''$ intersects $\alpha_2$ exactly once, but $\alpha_3$ can intersect $\alpha_1$ more than twice. 

Continuing in this way, we will be able to construct a sequence of bicorn curves $\alpha=\alpha_0, \alpha_1, \alpha_2, \cdots, \alpha_n$, where the adjacent curves $\alpha_i$, $\alpha_{i+1}$ intersect exactly once. Since the intersection number of $\alpha$ and $\beta$ is finite, the sequence must terminate at $\beta$, that is, $\alpha_n=\beta$. By induction, all the bicorn curves $\alpha_i$ are non-separating. The novelty of this sequence is that any $(\alpha_i, \alpha_j)$ pairing is coherent, because they are the union of the subarcs with induced orientation from $\alpha$ and $\beta$. This completes the proof. 
\end{proof}

\begin{figure}[ht]
\scalebox{.50}{\includegraphics[origin=c, width=1.20 \textwidth,  height = 1.0 \textwidth]{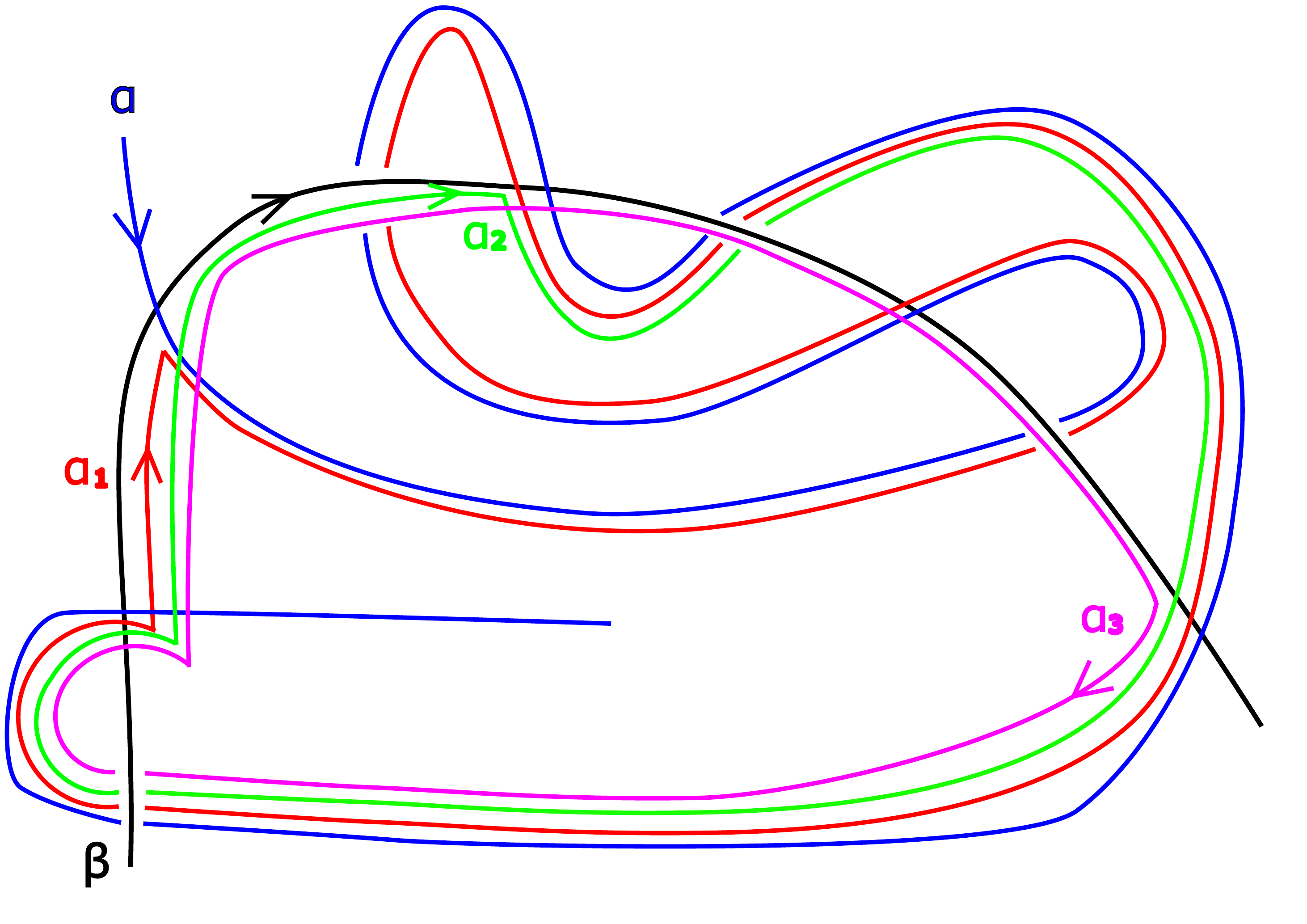}}
\vspace{0cm}
\caption{An example that extends bicorn curve $\alpha_1$ along $\beta$ to obtain bicorn curves $\alpha_2$ and $\alpha_3$ such that $|\alpha_2 \cap \alpha_3|=1$ and $|\alpha_1 \cap \alpha_3|=3$.}
\label{bicorn_extension}
\end{figure}

The sequence of bicorn curves constructed above is called a \emph{bicorn path}. It is not an actual path, as the adjacent curves are not necessarily disjoint. 

The Proposition \ref{intersection1} actually shows the following.

\begin{Thmn}[\ref{origami paths exist}]
For any origami pair $(u,v)$, there exists an origami edge-path $ \mathcal{E} = \{u = v_0 , \cdots , v_n = v \}$.
\end{Thmn}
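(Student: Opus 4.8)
The plan is to obtain the theorem as an immediate consequence of Proposition \ref{intersection1}, which has already carried out the substantive work. Given an origami pair $(u,v)$, I would first fix coherent non-separating representatives $\alpha$ and $\beta$ of $u$ and $v$ realizing their minimal (equivalently, algebraic) intersection number. By hypothesis $(\alpha,\beta)$ is a coherent filling pair, so in particular $\alpha$ and $\beta$ are coherent non-separating curves, and Proposition \ref{intersection1} applies. It produces a sequence of non-separating curves $\alpha = \alpha_0, \alpha_1, \ldots, \alpha_n = \beta$ with $|\alpha_i \cap \alpha_{i+1}| = 1$ and with every pair $(\alpha_i,\alpha_j)$ coherent. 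Taking $v_i$ to be the isotopy class of $\alpha_i$ gives the candidate origami edge-path $\mathcal{E} = \{u = v_0, \ldots, v_n = v\}$ in $\ns$.

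It then remains only to verify the three conditions in the definition of an origami edge-path. Conditions $1$ and $2$ are precisely the two conclusions of Proposition \ref{intersection1}: consecutive curves meet once, so $|v_i \cap v_{i+1}| = 1$, and every pair $(v_i,v_j)$ is coherent. That each $v_i$ is a legitimate vertex of $\ns$ is also supplied by the proposition, since every $\alpha_i$ is shown to be non-separating.

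For condition $3$ I would observe that it is not a further hypothesis to be arranged but a formal consequence of condition $2$ together with Theorem \ref{ori}. Indeed, suppose some pair $(v_i,v_j)$ happens to be a filling pair. By condition $2$ it is already coherent, so $(v_i,v_j)$ is a coherent filling pair, and Theorem \ref{ori} identifies any such pair with an origami on $S_g$; that is, $(v_i,v_j)$ is an origami pair. No pair therefore requires additional treatment, and the ``thus'' in condition $3$ is justified.

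The genuinely delicate points all live inside Proposition \ref{intersection1} rather than in this deduction, and I expect them to be the main obstacle were one proving the statement from scratch. The bicorn extension scheme must simultaneously guarantee three things along the entire sequence: that consecutive bicorn curves intersect exactly once, so that only the right-hand configuration of Figure \ref{bicorn} occurs (which is where coherence of $\alpha$ and $\beta$ enters); that each $\alpha_i$ remains non-separating, argued inductively from the single-intersection property with its predecessor; and, most subtly, that even the far-apart pairs $(\alpha_i,\alpha_j)$, which may intersect many times (as in Figure \ref{bicorn_extension}, where $|\alpha_1 \cap \alpha_3| = 3$), stay coherent. This last point is exactly what forces the curves to be assembled from arcs of $\alpha$ and $\beta$ carrying their induced orientations: coherence of distant pairs follows not from controlling their geometric intersection number directly, but from the fact that their orientations are inherited coherently from the single fixed coherent pair $(\alpha,\beta)$.
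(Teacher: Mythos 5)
Your proposal is correct and follows exactly the paper's route: the paper likewise obtains Theorem \ref{origami paths exist} as an immediate consequence of Proposition \ref{intersection1} (stating only that the proposition ``actually shows'' the theorem), with condition 3 of the definition being automatic from coherence via Theorem \ref{ori}. Your spelled-out verification of the three conditions, and your identification of where the real work lies (inside the bicorn construction of the proposition), matches the paper's reasoning.
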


In Lemma 2.4 of \cite{PS}, Przytycki and Sisto showed that the $2$-neighborhoods of bicorn paths between a pair of curves are connected. Bicorn paths are a generalization of the unicorn paths introduced in \cite{HPW} to prove the uniform hyperbolicity of arc graphs.  Proposition 4.2 in \cite{HPW} states that the unicorn paths stay in a $6$-neighborhood of a geodesic in the arc graph. We will show a similar result for bicorn paths in the curve graph. In the following lemma, we denote the set of all the bicorn curves between $\alpha$ and $\beta$ as $B(\alpha, \beta)$, and $\mathcal{B}\in B(\alpha,\beta)$ as a bicorn path. Note that bicorn curves are only representatives of curves, so they might not be distinct. Besides the endpoints, two bicorn paths can share vertices.  

\begin{Lem}\label{midpoint}
 Let $x_0,\cdots, x_m$ be a sequence of curves in $\mathcal{C}^1(S_g)$ with $2^{n-1}< m \leq 2^n$ for some positive integer $n$, then for any bicorn curve $c\in \mathcal{B} \in B(x_0, x_m)$, there exists a curve $c^*\in \mathcal{B^*}\in B(x_i, x_{i+1})$ such that $d(c, c^*) \leq 2n = 2\lceil log_2(m) \rceil$, where $\lceil * \rceil$ denotes the least integer that is larger than or equal to $*$. 
\end{Lem}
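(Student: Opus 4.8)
The plan is to run a dyadic divide-and-conquer induction on $n$, reducing the whole statement to a single one-step estimate. The building block I would establish first is a \emph{middle-curve lemma}: for any three curves $x,y,z$ and any bicorn $c\in B(x,z)$, there is a bicorn $c'\in B(x,y)\cup B(y,z)$ with $d(c,c')\le 2$. This is the bicorn analogue of the distance-one surgery move for unicorn arcs used in \cite{HPW}, and the constant $2$ is precisely the one furnished by the connectivity of $2$-neighborhoods of bicorn paths (Lemma 2.4 of \cite{PS}). Once this building block is available, the lemma follows by iterating it along a halving of the index set.

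To prove the middle-curve lemma I would split on whether $y$ meets $c$. If $y\cap c=\emptyset$, then $y$ itself is an admissible bicorn of the pair $(x,y)$ (by definition $\gamma=y$ qualifies as a bicorn of any pair containing $y$), and $d(c,y)\le 1$, so we may simply take $c'=y$. If $y$ intersects $c=x'\cup z'$, I would surger: following the $x$-arc $x'$ or the $z$-arc $z'$ of $c$ from a corner to an outermost intersection point with $y$ produces a sub-arc of $y$ which, glued to a sub-arc of $x$ or of $z$, closes up into a bicorn lying in $B(x,y)$ or in $B(y,z)$. This surgered curve can be arranged to be disjoint from $c$, or to be joined to $c$ through a single intermediate curve, which is exactly what yields $d(c,c')\le 2$. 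This step is where I expect essentially all of the work to reside, and I would lean on the surgery bookkeeping of \cite{PS} to certify that the cost never exceeds two edges.

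With the building block in hand the induction is routine. Set $k=\lceil m/2\rceil$, so that the two subsequences $x_0,\dots,x_k$ and $x_k,\dots,x_m$ each span index-length at most $2^{n-1}$. Given $c\in\mathcal{B}\in B(x_0,x_m)$, the middle-curve lemma applied with $y=x_k$ produces a bicorn $c_1\in B(x_0,x_k)\cup B(x_k,x_m)$ with $d(c,c_1)\le 2$; applying the induction hypothesis to whichever half contains $c_1$ yields a bicorn $c^*\in B(x_i,x_{i+1})$ with $d(c_1,c^*)\le 2(n-1)$, whence $d(c,c^*)\le 2+2(n-1)=2n$. The base case $m=1$ is immediate, since $B(x_0,x_1)$ already consists of bicorns between consecutive curves, and $2^{n-1}<m\le 2^n$ gives $n=\lceil\log_2 m\rceil$, matching the stated bound. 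The main obstacle is thus concentrated entirely in the middle-curve estimate: one must verify that surgering a bicorn against the single intermediate curve $y$ costs at most two edges in $\mathcal{C}^1(S_g)$, for which the essential input is the connectivity result of \cite{PS}.
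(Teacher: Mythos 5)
Your proposal is correct and takes essentially the same route as the paper: the ``middle-curve lemma'' you isolate (any bicorn in $B(x,z)$ lies within distance $2$ of some bicorn in $B(x,y)\cup B(y,z)$) is precisely Lemma 2.6 of Przytycki--Sisto, which the paper simply cites before running the same dyadic halving induction. The only quibble is that the relevant citation is Lemma 2.6 of \cite{PS}, not the connectivity statement of Lemma 2.4, so you need not re-derive the surgery step yourself.
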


\begin{proof}
By Lemma 2.6 in \cite{PS}, each bicorn curve in $B(\alpha, \beta)$ is in the 2-neighborhood of $B(\alpha, \gamma)\cup B(\gamma, \beta)$ for any curve $\gamma$. In particular, we can take the $\gamma=x_{2^{n-1}}$. By induction, continue to take the midpoints $n-1$ times for the preceding pieces of segments, there must be a curve $c^*\in \mathcal{B^*}\in B(x_i,x_{i+1})$ for some $i$ such that $d(c,c^*)\leq 2n = 2\lceil log_2(m)\rceil $. 
\end{proof}

\begin{Prop}\label{14-bounded}
 Let $\Gamma$ be a geodesic connecting two curves $\alpha$ and $\beta$, then the bicorn paths between $\alpha$ and $\beta$ stay in a 14-neighborhood of $\Gamma$ in the curve graph $\mathcal{C} ^1(S_g)$. 
\end{Prop}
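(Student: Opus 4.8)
The plan is to combine Lemma~\ref{midpoint} with the known fact---cited just before the lemma---that bicorn paths between any pair of curves $x_i, x_{i+1}$ lie in a $2$-neighborhood of \emph{any} geodesic joining them, or more precisely to exploit the way bicorn paths interact with the vertices of a geodesic. First I would let $\Gamma = \{x_0 = \alpha, x_1, \dots, x_m = \beta\}$ be the given geodesic, so that $m = d(\alpha,\beta)$ and the vertices $x_i$ of $\Gamma$ are themselves curves. The key observation is that consecutive vertices $x_i, x_{i+1}$ of $\Gamma$ are \emph{disjoint} (they share an edge), and a bicorn path $\mathcal{B}^* \in B(x_i, x_{i+1})$ between two disjoint curves is degenerate: every bicorn curve between disjoint curves is within distance $1$ of both $x_i$ and $x_{i+1}$, hence within distance $1$ of $\Gamma$. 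So any curve $c^* \in \mathcal{B}^* \in B(x_i, x_{i+1})$ satisfies $d(c^*, \Gamma) \leq 1$.

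Next I would feed the geodesic $\Gamma$ into Lemma~\ref{midpoint}. Taking the sequence $x_0, \dots, x_m$ to be exactly the vertices of $\Gamma$, the lemma tells us that for any bicorn curve $c \in \mathcal{B} \in B(\alpha, \beta)$ there is a curve $c^* \in \mathcal{B}^* \in B(x_i, x_{i+1})$ for some $i$ with
\[
d(c, c^*) \leq 2\lceil \log_2(m) \rceil.
\]
Combining this with $d(c^*, \Gamma) \leq 1$ via the triangle inequality gives $d(c, \Gamma) \leq 2\lceil \log_2(m) \rceil + 1$. This is a bound that grows with $m$, so it is not yet the desired uniform constant $14$; the remaining work is to promote the logarithmic bound into the constant $14$, and this is where the real content lies.

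The hard part will be removing the dependence on $m$. The standard device---this is exactly the trick in the Hensel--Przytycki--Webb proof that unicorn paths stay in a bounded neighborhood of a geodesic---is a bootstrapping/self-improvement argument. Suppose the bicorn paths between \emph{any} pair of curves stay within distance $R$ of any geodesic between them, where $R$ is the smallest such constant (a priori we only know $R$ is finite, from the logarithmic bound applied once, or one argues by contradiction assuming $R$ is large). I would then take a bicorn curve $c$ realizing (close to) the distance $R$ to $\Gamma$, subdivide $\Gamma$ at its midpoint $\gamma = x_{\lceil m/2\rceil}$, and apply the $2$-neighborhood splitting property (Lemma~2.6 of \cite{PS}): $c$ lies within distance $2$ of $B(\alpha,\gamma) \cup B(\gamma,\beta)$. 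A curve $c'$ in one of these half-paths lies within $R$ of the corresponding geodesic \emph{subsegment} of $\Gamma$, which is a geodesic since subsegments of geodesics are geodesic, hence within $R$ of $\Gamma$ itself. This yields an inequality of the form $R \leq \tfrac{R}{2} + (\text{bounded error})$, forcing $R$ to be at most twice the bounded error; chasing the constants ($2$ from the splitting, together with the base-case estimate for short geodesics where $m$ is small enough that $2\lceil\log_2 m\rceil + 1$ is itself controlled) is what produces the explicit value $14$.

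I expect the main obstacle to be precisely this constant-chasing: organizing the induction so that the self-improvement inequality closes with the specific bound $14$ rather than some larger number, and handling the base cases (small $m$, where $\log_2 m$ is not large and one must verify the claim directly) cleanly. One must be careful that the halving argument uses geodesic subsegments correctly and that the degenerate bicorn paths between disjoint consecutive vertices of $\Gamma$ contribute the right small constant. Aside from this bookkeeping, every ingredient---Lemma~\ref{midpoint}, the splitting Lemma~2.6 of \cite{PS}, and the disjointness of consecutive geodesic vertices---is already available.
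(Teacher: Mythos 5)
Your first step is fine as far as it goes: feeding the vertices of $\Gamma$ into Lemma~\ref{midpoint} and noting that a bicorn path between two disjoint curves is degenerate does give $d(c,\Gamma)\leq 2\lceil\log_2 m\rceil+1$ with $m=d(\alpha,\beta)$. The gap is in the mechanism you propose for removing the dependence on $m$. The ``self-improvement'' you describe does not close: if you split $\Gamma$ at its midpoint and use Lemma~2.6 of \cite{PS} to find $c'$ within distance $2$ of $c$ lying on a bicorn path over one of the halves, the hypothesis applied to that half (whose subsegment of $\Gamma$ is again a geodesic contained in $\Gamma$) only yields $d(c',\Gamma)\leq R$, hence $d(c,\Gamma)\leq R+2$. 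Nothing halves $R$; what halves is the \emph{length} of the geodesic, and iterating that halving $\lceil\log_2 m\rceil$ times is exactly what Lemma~\ref{midpoint} already records. So the inequality $R\leq R/2+O(1)$ has no source; moreover $R$ is not a priori finite (your base bound grows with $m$), so the extremal-constant framing cannot even get started.

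The device that actually works---and it is the one in Proposition~4.2 of \cite{HPW}, which is not the halving bootstrap you attribute to it---is a localization in terms of $k:=\max_{c\in\mathcal{B}}d(c,\Gamma)$ rather than $m$. Take $c$ realizing $k$ and pass to the maximal bicorn subpath $\mathcal{B}'\subset\mathcal{B}$ containing $c$ whose endpoints $a',b'$ satisfy $d(c,a'),\,d(c,b')\leq 2k$ (declaring $a'=\alpha$ or $b'=\beta$ if that whole side is within $2k$). Let $a'',b''$ be closest points of $\Gamma$ to $a',b'$, so $d(a',a''),\,d(b',b'')\leq k$ and $d(a'',b'')\leq 6k$; concatenate $[a',a'']$, $[a'',b'']\subset\Gamma$, and $[b'',b']$ into a path $L$ of at most $8k$ vertices from $a'$ to $b'$. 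Since $\mathcal{B}'$ is a bicorn path in $B(a',b')$, Lemma~\ref{midpoint} applied to $L$ produces a vertex $x_i$ of $L$ with $d(c,x_i)\leq 2\lceil\log_2(8k)\rceil$. If $x_i$ lies on $[a',a'']$ or $[b'',b']$ then $d(c,x_i)\geq 2k-k=k$ by the choice of $a',b'$; if $x_i\in[a'',b'']\subset\Gamma$ then $d(c,x_i)\geq k$ by definition of $k$. Either way $k\leq 2\lceil\log_2(8k)\rceil$, which forces $k\leq 14$. The essential point you are missing is that the auxiliary path to which Lemma~\ref{midpoint} is applied must have length controlled by $k$, not by $m$; that is what converts the logarithmic estimate into a uniform constant.
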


\begin{proof}
 The proof is almost identical to that of the Proposition 4.2 in \cite{HPW}. Suppose that the curve $c\in \mathcal{B}\in B(\alpha, \beta)$ is at maximal distance $k$ from the geodesic $\Gamma$. Choose such a bicorn path $\mathcal{B}$, and take the maximal bicorn subpath $\mathcal{B'}\subset \mathcal{B}$ containing $c$ such that the endpoints $a'$ and $b'$ of the subpath is at distance $\leq 2k$ from $c$. If $[c,a']$ or $[c,b']$ covers $[c,\alpha]$ or $[c,\beta]$, then let $a'=\alpha$ or $b'=\beta$. By the construction of bicorn curves, we know that $\mathcal{B'}\in \mathcal{B}(a',b')$. 
 
\begin{figure}[ht]
\vspace{-2cm}
\scalebox{.60}{\includegraphics[origin=c, width=1.20 \textwidth,  height = 1.0 \textwidth]{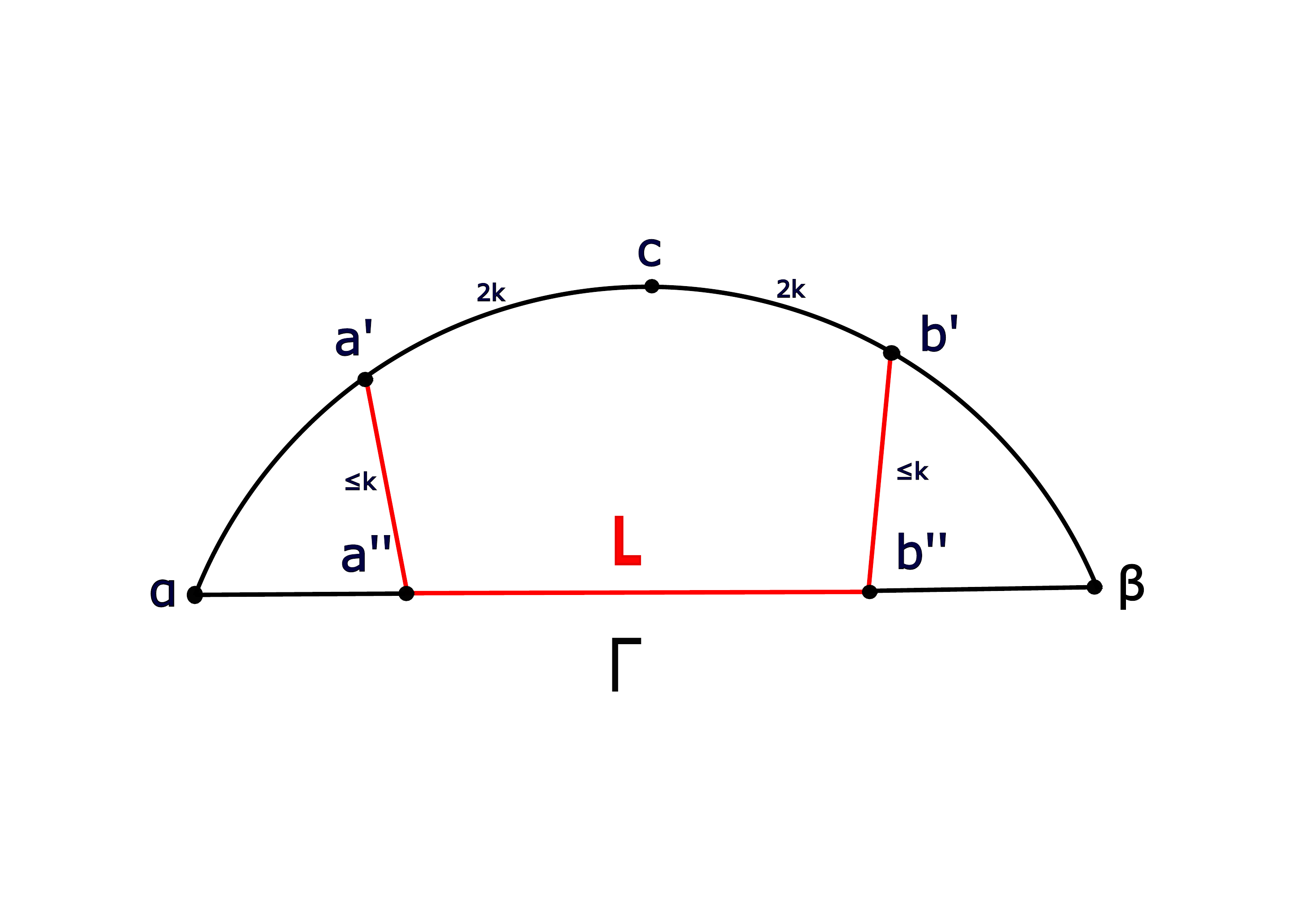}}
\vspace{-2cm}
\caption{A bicorn path $\mathcal{B}\in B(\alpha, \beta)$ is on the top and a geodesic $\Gamma=[\alpha, \beta]$ is at the bottom.}
\label{bounded}
\end{figure}
 
 Let $a''$ and $b''$ be the curves in the geodesic $\Gamma$ that are closest to the curves $a'$ and $b'$, then $d(a',a'') \leq k$ and $d(b',b'') \leq k$. If $a' = \alpha$ or $b' = \beta$, then $a'' = \alpha$ or $b'' = \beta$. See Fig. \ref{bounded}. By the triangle inequality, we have $d(a'',b'') \leq d(a'',a')+d(a',b')+d(b',b'') \leq k+4k+k=6k$. Let $L$ be the path constructed by concatenation of $[a'', b'']\subset \Gamma$ and some geodesics $[a',a'']$, $[b',b'']$. 
 
 Apply the previous Lemma \ref{midpoint} to the sequence of curves in the path $L$ and the curve $c\in \mathcal{B'}\subset \mathcal{B}(a',b')$. Note that the number of curves in $L$ is at most $k+6k+k=8k$. Therefore, the distance between curve $c$ and path $L$ is less than or equal to $2\lceil log_2(8k)\rceil$. Let $x_i$ be the curve of $L$ that obtains the distance, then either $x_i \in [a',a''],[b',b'']$ or $x_i\in [a'', b'']\subset \Gamma$. In the first case, $d(c,x_i) \geq d(c,a') - d(a',x_i) \geq k$ and $d(c,x_i)\geq d(c,b') - d(b',x_i)\geq k$. In the latter case, as $c$ is distance $k$ from $\Gamma$, then $k \leq 2 \lceil log_2(8k) \rceil$. It follows that $k \leq 2 \lceil log_2(8k) \rceil$ has integer solutions when $k\leq14$. 
 \end{proof}
 

\begin{figure}[ht]
\scalebox{.50}{\includegraphics[origin=c, width=1.50 \textwidth,  height = 1.0 \textwidth]{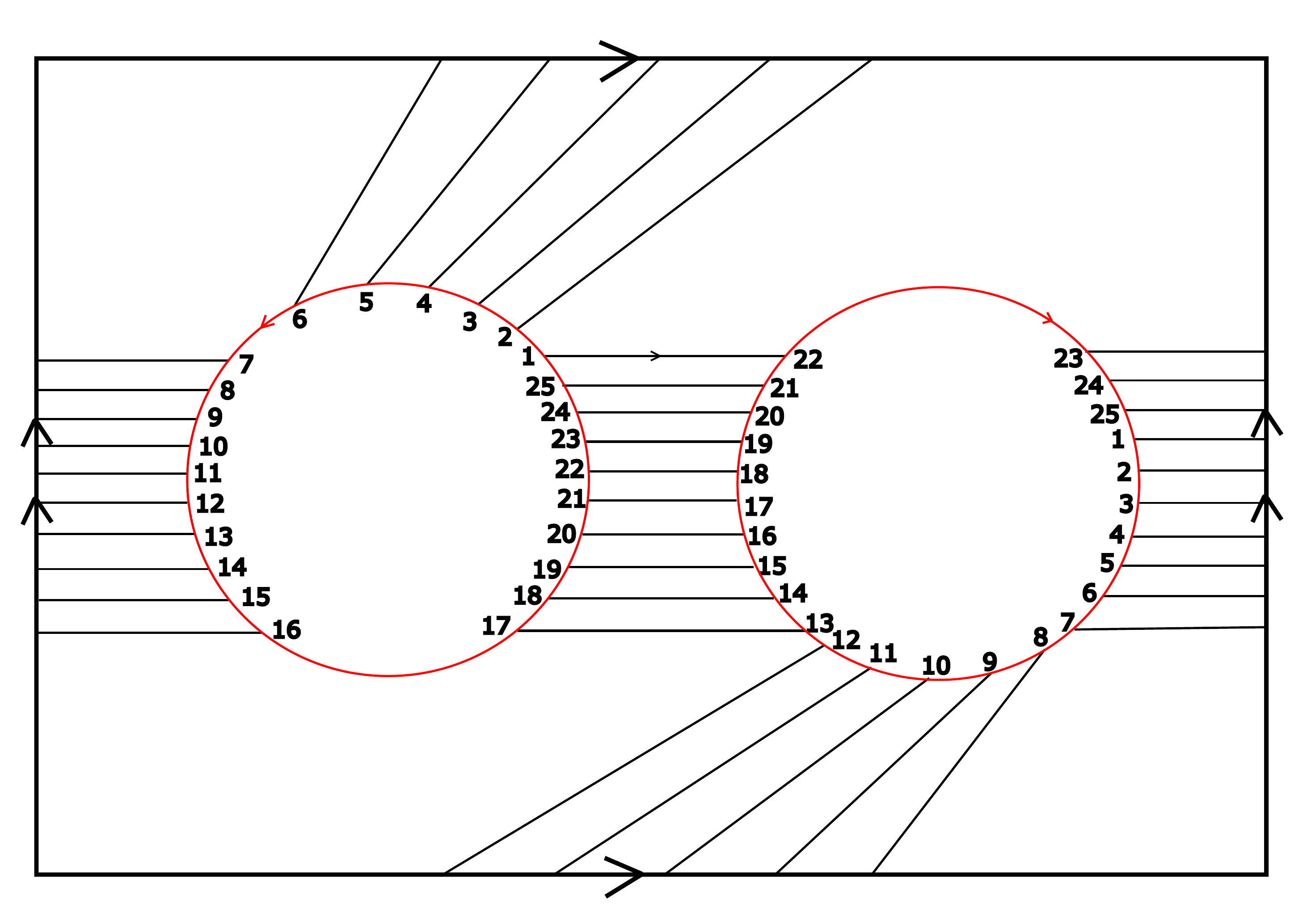}}
\vspace{-0cm}
\caption{Hempel's example is a coherent filling pair.}
\label{Hempel}
\end{figure}

\begin{figure}[hbt!]
\scalebox{.50}{\includegraphics[origin=c, width=1.50 \textwidth,  height = 1.0 \textwidth]{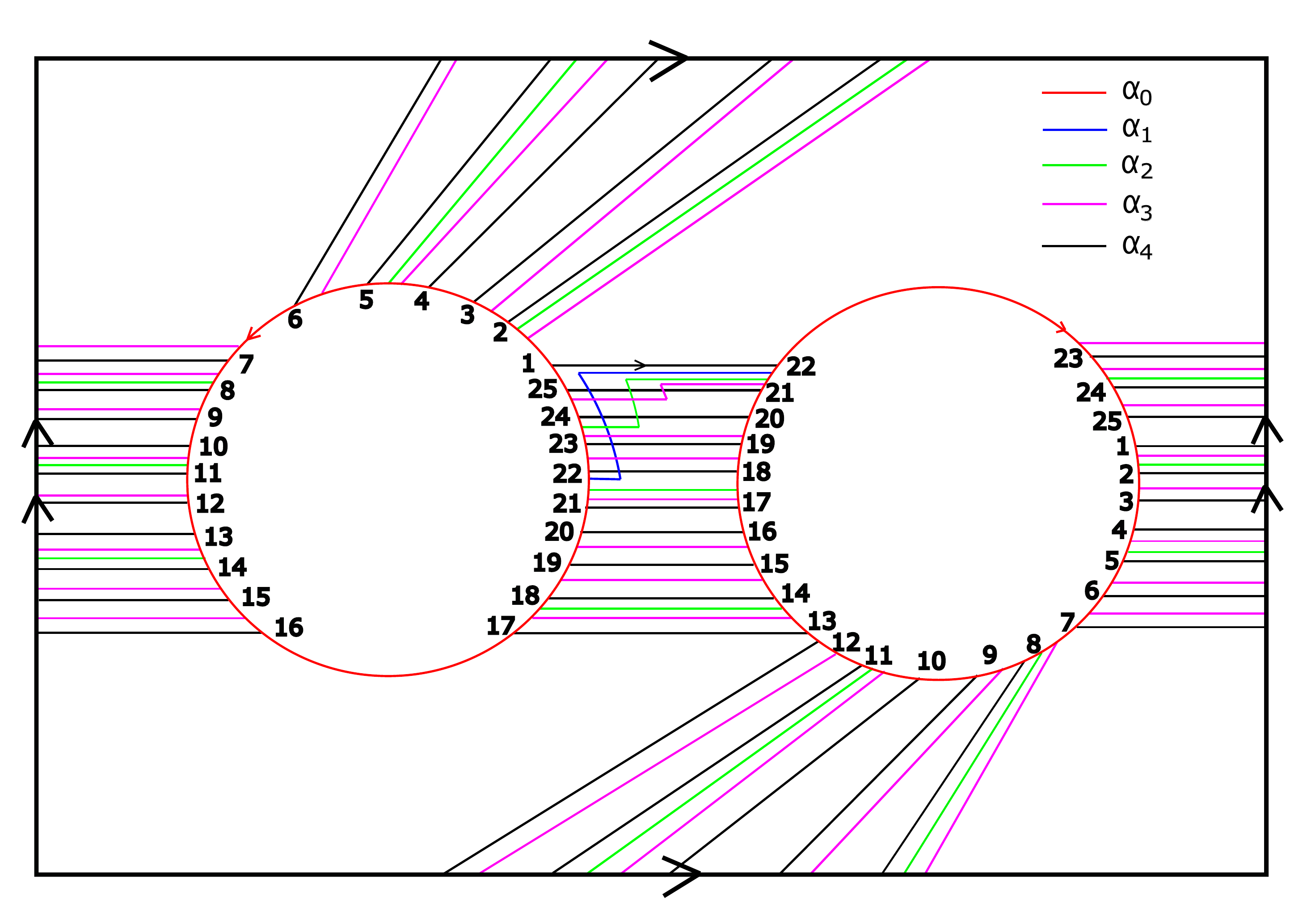}}
\vspace{-0cm}
\caption{An example of an origami edge-path in Hempel's example.}
\label{Hempel_path}
\end{figure}

\section{Examples}
\label{examples}

\subsection{Hempel's origami pair for genus $2$.}
\label{Hempel's example}

A well known example of a filling pair is Hempel's genus 2 distance 4 example \cite{Sc}.
The surface $S_2$ can be represented by a rectangle minus two discs as illustrated in Fig. \ref{Hempel}.  First, the boundaries of the two discs are glued with the indicated orientation and the indices on the boundaries are matched. For the rectangle, the left side is glued with the right side with the indicated orientation, and arcs intersecting with them are joined up in the same order. Similarly, the top and bottom sides are glued to match the arcs in the order as indicated. The result is a closed surface with genus 2 with two simple closed curves on it, one of which is red, and the other is black.  The resulting red curve as the boundaries of the discs glued together is denoted by $\alpha$. The black arcs are closed up as a simple closed curve, denoted by $\beta$.  We observe that this filling pair is coherent and, thus, we have an origami pair.  The distance between them in the curve graph is $4$ \cite{BMW, GMMM}.  In Fig. \ref{Hempel_path}, we find a path of bicorn curves in the Hempel's example by the bicorn curve construction, where $\alpha=\alpha_0$ and $\beta=\alpha_4$.

It is readily checked that the pairs $(\alpha_0, \alpha_3)$ and $(\alpha_1 , \alpha_4)$ are also origami pairs of curves.  Let $u_{\alpha}$ and $v_\beta$ be the associated vertices in $\ns$.
Then we have ${\bf E}(u_\alpha , v_\beta) \leq 4 = \dns(u_\alpha , v_\beta)$.  Additionally, subjecting this bicorn path to the intersection quotient progression of Conjecture \ref{Conj2}
we see the following:
$$\frac{| \alpha_4 \cap \alpha_1 |}{| \alpha_0 \cap \alpha_1 |} \left(= \frac{4}{1} \right) >  \frac{| \alpha_4 \cap \alpha_2 |}{| \alpha_0 \cap \alpha_2 |} \left(= \frac{2}{8} \right) > \frac{| \alpha_4 \cap \alpha_3 |}{| \alpha_0 \cap \alpha_3 |} \left(= \frac{1}{19} \right).$$
In fact, by construction all bicorn paths will satisfy such a decreasing progression.  Our Conjecture \ref{Conj2} would imply that all bicorn paths are quasi-geodesics.

\subsection{Origami pairs of curves for $S_{g>2}$.}
For Euler characteristic reasons the theoretical minimal number of intersections required for a pair of curves in $S_{g \geq 2}$ to be a filling pair is $2g-1$.
For $g>2$, this minimal intersection is geometrically realizable \cite{AH}, and such a pair of curves is called a \emph{minimally intersecting filling pair}.  When restricting to coherent filling pairs their existence is established by the following result, as well as the work of Jeffreys \cite{Jef} on [1,1]-origamis. 

\begin{Th}\emph{(Aougab-Menasco-Nieland \cite{AMN})}
For any closed oriented surface $S_g$ with $g > 2$, there exists a minimally intersecting filling pair that intersects coherently. 
\end{Th}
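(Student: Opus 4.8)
The plan is to reduce the statement to a combinatorial existence problem about origamis and then to exhibit an explicit family. First I would record the Euler-characteristic bookkeeping: if $(\alpha,\beta)$ is a filling pair on $S_g$ with $N=|\alpha\cap\beta|$, then $\alpha\cup\beta$ induces a cell structure with $N$ vertices, $2N$ edges (each curve is cut into $N$ arcs), and $F$ faces, so $F=N+2-2g$. Since each face is a disc, $F\geq 1$ forces $N\geq 2g-1$, with equality exactly when $\alpha\cup\beta$ has a single complementary region, necessarily a disc with $8g-4$ sides. Invoking Theorem \ref{ori}, a \emph{coherent} minimally intersecting filling pair corresponds to a $[1,1]$-origami built from $2g-1$ unit squares all of whose corners are identified to a single vertex. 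The crucial simplification is that the correspondence of Theorem \ref{ori} produces a coherent pair automatically from any origami, so coherence costs nothing: the entire content becomes the construction of a $[1,1]$-origami with $2g-1$ squares and a single vertex class (equivalently, a square-tiled surface in the minimal stratum $\mathcal{H}(2g-2)$).

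Next I would encode such an origami algebraically. Labelling the $n:=2g-1$ squares by $\{1,\dots,n\}$, the right-to-left and top-to-bottom gluings are permutations $h,v\in S_n$, and the surface is connected iff $\langle h,v\rangle$ is transitive. The $[1,1]$ hypothesis---one horizontal and one vertical annulus---says precisely that $h$ and $v$ are each a single $n$-cycle, which already forces transitivity. A standard count identifies the vertex classes of the origami with the cycles of the commutator $[h,v]=hvh^{-1}v^{-1}$, a cycle of length $\ell$ contributing a cone point of angle $2\pi\ell$. Hence the single-vertex requirement is exactly that $[h,v]$ be an $n$-cycle, producing a lone cone point of angle $2\pi(2g-1)$, i.e.\ a zero of order $2g-2$. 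The theorem is therefore equivalent to the assertion that for every odd $n=2g-1\geq 5$ there exist two $n$-cycles in $S_n$ whose commutator is again an $n$-cycle.

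I would then produce such pairs explicitly. Fixing $h=(1\,2\,\cdots\,n)$, I would seek a closed-form partner $v$, or---more robustly---proceed by induction on $g$: starting from a base origami at $g=3$ (five squares), I would describe a square-insertion / handle-attachment operation passing from the genus-$g$ model to the genus-$(g+1)$ model by adjoining two squares, arranged so that $h$ and $v$ remain single cycles and the newly created corners merge into the existing vertex rather than splitting off a new one. In either approach the verification is a direct cycle-tracking computation; once the single-vertex property is confirmed, the genus follows immediately from $V-E+F=1-2n+n=1-n=2-2g$, and the cores of the horizontal and vertical annuli furnish the desired coherent, minimally intersecting filling pair.

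The hard part is precisely this last construction: exhibiting, uniformly in $g$, permutations (or a gluing pattern) that are provably full cycles with a full-cycle commutator. Proving $[h,v]$ is an $n$-cycle for an entire infinite family is the crux, since the cycle type of a commutator is not stable under small perturbations of $h$ or $v$. I expect the inductive square-insertion route to be the most controllable, with the genuinely delicate step being to show that the insertion amalgamates the two new corner quadrants into the pre-existing vertex class---keeping the vertex count equal to one---while simultaneously preserving the single-cycle structure of both $h$ and $v$.
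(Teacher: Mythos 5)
The paper does not actually prove this statement; it is quoted from Aougab--Menasco--Nieland \cite{AMN} (listed as in preparation), so there is no internal proof to compare your attempt against. Judged on its own terms, your reduction is sound and is essentially the standard dictionary for square-tiled surfaces: the Euler-characteristic count $F = N + 2 - 2g$ forcing $N \geq 2g-1$, the identification of a coherent minimally intersecting filling pair with a $[1,1]$-origami built from $n = 2g-1$ squares having a single vertex class (equivalently a one-horizontal-cylinder, one-vertical-cylinder surface in the minimal stratum), the encoding by permutations $h, v \in S_n$ that must each be $n$-cycles, and the identification of vertex classes with cycles of the commutator $[h,v]$. Your framework is even consistent at $g=2$: the only $3$-cycles in $S_3$ commute, so no such pair exists for $n=3$, matching the known non-realizability of $3$ intersections on $S_2$.

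The gap is that the reduction is all you have actually carried out, and the reduced statement---for every odd $n = 2g-1 \geq 5$ there exist two $n$-cycles $h, v \in S_n$ whose commutator is again an $n$-cycle---is the entire content of the theorem. You acknowledge this yourself (``the hard part is precisely this last construction''), but you neither exhibit the permutations nor prove the inductive square-insertion step; you only describe what such a construction would have to accomplish. Since the cycle type of a commutator is not controlled by soft arguments, nothing in your write-up certifies existence for even a single genus, let alone uniformly in $g$. To close the argument you would need either an explicit closed-form $v$ for $h = (1\,2\,\cdots\,n)$ together with a direct verification that $[h,v]$ is an $n$-cycle (e.g.\ by tracking the orbit of a single symbol), or a fully specified two-square insertion move with a proof that it simultaneously preserves the single-cycle property of $h$, of $v$, and of $[h,v]$. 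As it stands the proposal is a correct and useful reformulation, not a proof.
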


The complement of a minimally intersecting filling pair, $(\alpha , \beta )$, of $S_{g>2}$ has the property that $S \setminus (\alpha \cup \beta)$ is a single
$4(2g-1)$-gon disc.  As such, finding an $\alpha_1 \subset S_g$ having $|\alpha \cap \alpha_1 | = 1 = |\beta \cap \alpha_1 |$ is straightforward. In the single $4(2g-1)$-gon, choose two $\alpha$-subarcs and two $\beta$-subarcs that share the same edge, and connect them by two arcs. These two arcs are either disjoint or intersecting once, as illustrated in Fig. \ref{curves_origami}. In the first case, a band operation creates the $\alpha_1$. Otherwise,  take $\alpha_1$ to be the curve by reducing the intersection point. 

\begin{figure}[ht]
\vspace{-1cm}
\scalebox{.50}{\includegraphics[origin=c, width=1.50 \textwidth,  height = 1.0 \textwidth]{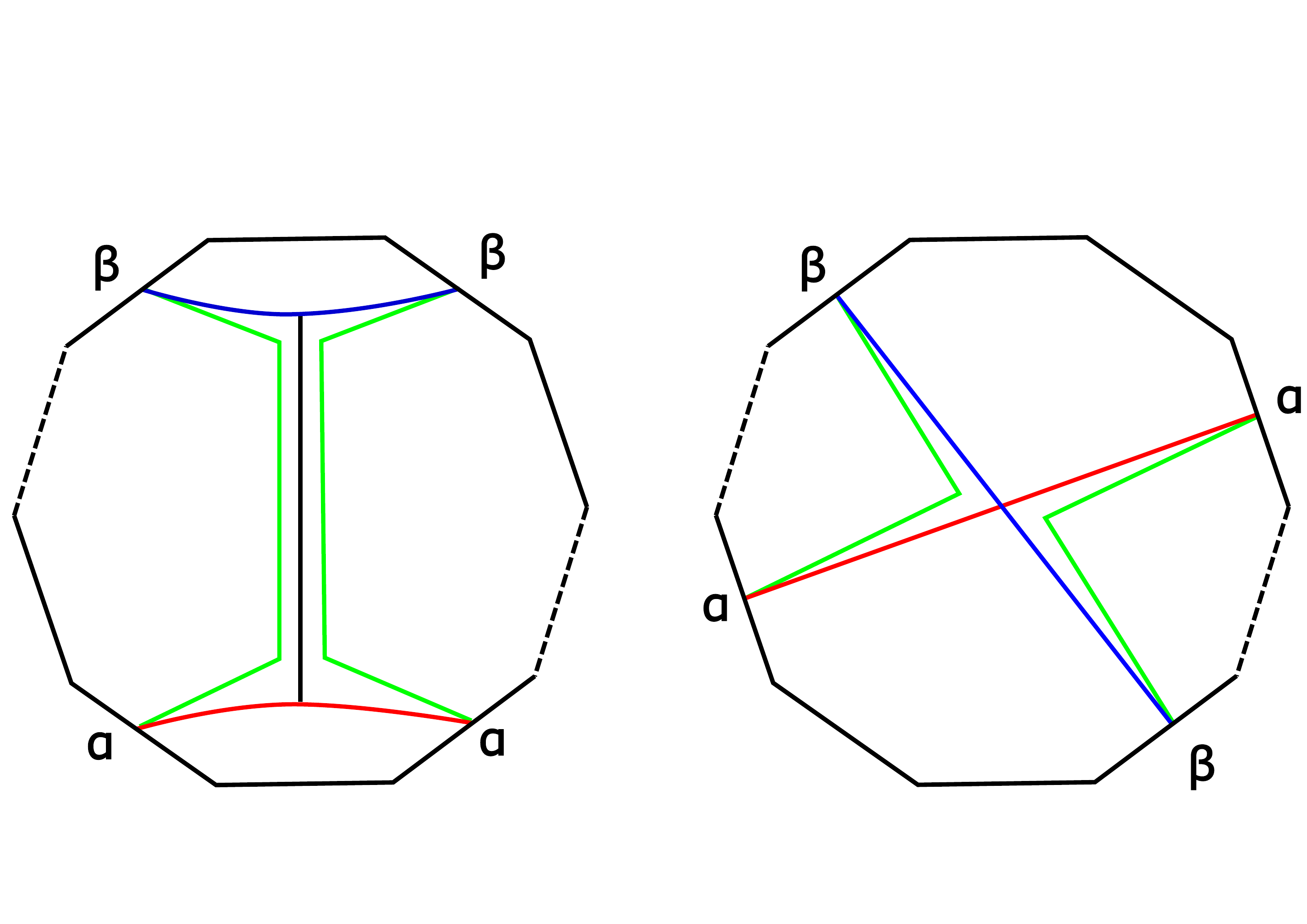}}
\vspace{-1cm}
\caption{ Two arcs in the left figure are disjoint; Two arcs in the right figure intersect exactly once. The arc connecting two $\alpha$-subarcs is in red; The arc connecting two $\beta$-subarcs is in blue; $\alpha_1$ is in green. }
\label{curves_origami}
\end{figure}


\section{Arbitrarily large distance of coherent filling pair}
\label{arbitrary distance}

In this section we establish Theorem \ref{infinite diameter}, that an origami pair of curves can be arbitrarily distant in the non-separating curve graph $\ns$. The basic strategy is to use a particular pseudo-Anosov to map a curve far away from its original position. Meanwhile, the resulting curve remains in coherent intersection. To achieve that, we need the following property of pseudo-Anosov maps. 

\begin{Th}\emph{(Masur-Minsky \cite{MM1} Proposition 4.6)}
\label{MM_pS}
For a surface $S_{g,b}$ with the complexity $\xi(S_{g,b})>1$, there exists $c>0$ 
such that, for any pseudo-Anosov $h\in Mod(S_{g,b})$, any $\gamma \in 
\mathcal{C}(S_{g,b})$ and any $n\in \mathbb{Z}$,
$$d_{\mathcal{C}(S_{g,b})}(h^n(\gamma),\gamma)\geq c|n|.$$
\end{Th}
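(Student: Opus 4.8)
The plan is to realize $h$ as a loxodromic (hyperbolic) isometry of $\mathcal{C}(S_{g,b})$ and to bound its stable translation length below by a constant depending only on the surface. First I would pass through Teichm\"uller space. A pseudo-Anosov $h$ has an invariant Teichm\"uller geodesic axis $A \subset \mathcal{T}(S_{g,b})$ on which $h$ acts by translation of length $\log \lambda_h$, where $\lambda_h$ is the dilatation; the two endpoints of $A$ in the Thurston boundary are the stable and unstable foliations $\mathcal{F}^{\pm}$ of $h$. I would then use the systole projection $\Psi \colon \mathcal{T}(S_{g,b}) \to \mathcal{C}(S_{g,b})$ sending a hyperbolic structure to one of its shortest curves. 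Two shortest curves intersect a bounded number of times (collar lemma), so $\Psi$ is coarsely well defined with image-diameter bounded by a topological constant, and it is genuinely $\mathrm{Mod}(S_{g,b})$-equivariant: $\Psi(h^n X) = h^n \Psi(X)$ for all $n$.

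Next I would invoke the Masur--Minsky machinery that $\Psi$ carries every Teichm\"uller geodesic to an unparametrized quasi-geodesic with constants depending only on $S_{g,b}$. Because $\mathcal{F}^{+}$ and $\mathcal{F}^{-}$ are filling and distinct, Klarreich's theorem identifies them with two distinct points of the Gromov boundary $\partial_\infty \mathcal{C}(S_{g,b})$; hence $\Psi(A)$ is a bi-infinite uniform quasi-geodesic joining distinct boundary points. Combined with the equivariance $\Psi(h^n X_0) = h^n \Psi(X_0)$, this shows that the orbit $n \mapsto h^n \gamma$ fellow-travels $\Psi(A)$ monotonically, with no backtracking, by $\delta$-hyperbolicity together with the distinctness of the endpoints. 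By the standard dynamics of loxodromic isometries of a hyperbolic space, $h$ then has a well-defined positive stable translation length $\tau(h) = \lim_{n} d_{\mathcal{C}}(h^n \gamma, \gamma)/n$, and $d_{\mathcal{C}}(h^n \gamma, \gamma) \geq \tau(h)\,|n| - C$ for every curve $\gamma$ and every $n$, with $C = C(S_{g,b})$ absorbing the quasi-geodesic constants and the systole-set diameter.

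The crux is the uniformity of the constant: I must bound $\tau(h)$ below by a positive number independent of $h$. The quasi-geodesic argument by itself produces a positive $\tau(h)$ for each individual $h$, but the bound it yields naturally involves $\log \lambda_h$ and thus threatens to degenerate. I would remove this dependence using Bowditch's theorem that the stable translation length of a pseudo-Anosov on $\mathcal{C}(S_{g,b})$ is rational with denominator bounded by a constant $N = N(S_{g,b})$; since $\tau(h) > 0$, it follows that $\tau(h) \geq 1/N$ uniformly. Alternatively one may invoke the fixed-surface spectral gap for dilatations, $\log \lambda_h \geq \log \lambda_{\min}(S_{g,b}) > 0$, together with the uniform quasi-geodesic constants.

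Finally I would convert the asymptotic estimate into the stated bound for all $n$: for $|n|$ large, $\tau(h)\,|n| - C \geq |n|/(2N)$, while for small $|n| \neq 0$ a pseudo-Anosov fixes no curve (indeed $i(h^n \gamma, \gamma) \to \infty$), so $d_{\mathcal{C}}(h^n \gamma, \gamma) \geq 1$; choosing $c = c(S_{g,b})$ small enough then makes $d_{\mathcal{C}}(h^n \gamma, \gamma) \geq c|n|$ hold for every $n$. I expect the main obstacle to be precisely this uniformity in $h$: the existence of a positive translation length is comparatively soft, but ruling out that it tends to $0$ as $\lambda_h \to 1$ is what forces either Bowditch's rationality theorem or the minimal-dilatation input, and it is the step where the hyperbolicity of $\mathcal{C}(S_{g,b})$ must be used in an essential way.
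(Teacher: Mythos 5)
The paper does not actually prove this statement---it is imported verbatim as Proposition 4.6 of Masur--Minsky \cite{MM1}---so the only meaningful comparison is with the original argument, and your route is genuinely different from it. Masur and Minsky's proof is combinatorial and, notably, does not use hyperbolicity of $\mathcal{C}(S_{g,b})$ at all: they take a complete train track carrying the stable lamination of $h$, apply their nesting lemma (Lemma 4.4 of \cite{MM1}, which says that any curve at distance $1$ from the set of curves carried by a fully split track is still carried by the unsplit track), and use the finiteness of complete train tracks up to the mapping class group action to extract a uniform $m = m(S_{g,b})$ such that each application of $h^{m}$ pushes the orbit at least one level deeper into a nested family. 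Linear growth with $c$ depending only on the surface falls out directly, for every $\gamma$ and every $n$, with no asymptotic-to-uniform conversion. Your assembly---systole projection, the Masur--Minsky theorem that Teichm\"uller geodesics project to unparametrized quasi-geodesics, Klarreich's boundary to rule out elliptic or parabolic behavior and get $\tau(h) > 0$, and Bowditch's rationality theorem to force $\tau(h) \geq 1/N(S_{g,b})$---is logically sound and not circular (Bowditch's theorem rests on acylindricity and tight-geodesic finiteness, not on Proposition 4.6), but it is anachronistic and far heavier than what it proves; your closing intuition that hyperbolicity ``must be used in an essential way'' is in fact wrong for the original proof. What your route buys is conceptual clarity about loxodromic dynamics; what the train-track route buys is elementarity, uniformity for free, and independence from the hyperbolicity theorem.

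Two local corrections. First, your fallback alternative---the dilatation gap $\log\lambda_h \geq \log\lambda_{\min}(S_{g,b})$ combined with the uniform quasi-geodesic constants---does not work as stated: the projection is only an \emph{unparametrized} quasi-geodesic, so translation by $\log\lambda_h$ along the Teichm\"uller axis gives no lower bound on progress in $\mathcal{C}(S_{g,b})$; the reparametrization can stall over long stretches of the axis, and nothing in the constants ties the stall length to $\lambda_h$. Ruling that out is precisely the uniformity you are trying to prove, so the Bowditch step is doing real work and cannot be swapped out this way. Second, your final patching with the additive constant $C$ and the small-$|n|$ case is unnecessary: for fixed $\gamma$ the sequence $a_n = d_{\mathcal{C}(S_{g,b})}(h^n\gamma, \gamma)$ is subadditive and $d_{\mathcal{C}(S_{g,b})}(h^{-n}\gamma,\gamma) = a_n$, so Fekete's lemma gives $a_n/n \geq \inf_m a_m/m = \tau(h)$, hence $d_{\mathcal{C}(S_{g,b})}(h^n\gamma,\gamma) \geq \tau(h)\,|n| \geq |n|/N$ exactly, for every $n$ and every $\gamma$, and $c = 1/N(S_{g,b})$ works outright.
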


For a closed oriented surface $S_g$ with $g\geq 2$, the complexity $\xi(S_g)>3$, then the above theorem can be applied to this case. Another ingredient is Thurston's construction of pseudo-Anosov homeomorphisms used to preserve coherent intersection. 
\begin{Th}\emph{(Thurston \cite{Thurston} Theorem 7)}
\label{Thurston}
Suppose that two curves $\alpha$ and $\beta$ is a filling pair for $S_g$, then any product of positive powers of the Dehn twists $T_{\alpha}$ and negative powers of Dehn twists $T_{\beta}$ is a pseudo-Anosov homeomorphism, provided that each $T_{\alpha}$ and $T_{\beta}^{-1}$ occurs at least once. 
\end{Th}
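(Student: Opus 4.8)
The plan is to realize Thurston's construction through the flat (origami) structure already produced for filling pairs and then to reduce the pseudo-Anosov question to a $2\times 2$ matrix computation in $SL(2,\mathbb{R})$. Because $\alpha$ and $\beta$ fill $S_g$, the dual cell decomposition of Theorem \ref{ori} equips $S_g$ with a singular Euclidean structure in which $\alpha$ is the core of a single horizontal cylinder and $\beta$ the core of a single vertical cylinder, the cone points being the centers of the complementary polygons. Set $n = i(\alpha,\beta) = |\alpha \cap \beta|$. In this structure each Dehn twist is isotopic to an affine homeomorphism that fixes the cone set and shears the corresponding cylinder; computing the cylinder modulus from the unit-square normalization gives
\[
\rho(T_\alpha) = \begin{pmatrix} 1 & n \\ 0 & 1 \end{pmatrix}, \qquad \rho(T_\beta^{-1}) = \begin{pmatrix} 1 & 0 \\ n & 1 \end{pmatrix},
\]
where $\rho$ is the homomorphism from $\langle T_\alpha, T_\beta\rangle$ into $SL(2,\mathbb{R})$ sending each mapping class to the derivative of its affine representative.

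The second step is the dictionary at the heart of the theorem: an affine homeomorphism of a singular Euclidean surface whose derivative is a hyperbolic element of $SL(2,\mathbb{R})$, i.e. $|\mathrm{tr}| > 2$, is pseudo-Anosov, its invariant unstable and stable foliations being the images of the horizontal and vertical foliations dilated by the eigenvalues $\lambda > 1$ and $\lambda^{-1}$. I would establish this by checking that the horizontal and vertical measured foliations are the eigen-directions of the derivative, so the affine map multiplies one transverse measure by $\lambda$ and the other by $\lambda^{-1}$; the eigen-slopes are irrational whenever $|\mathrm{tr}|>2$, which rules out closed leaves and yields the required minimality. It therefore suffices to show that for every admissible word the matrix $\rho(\phi)$ is hyperbolic.

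Now let $\phi$ be a product of positive powers of $T_\alpha$ and negative powers of $T_\beta$ in which each of $T_\alpha$ and $T_\beta^{-1}$ appears at least once. Grouping consecutive like factors, $\rho(\phi)$ is an alternating product of matrices $U(n a_i)=\left(\begin{smallmatrix} 1 & n a_i \\ 0 & 1\end{smallmatrix}\right)$ and $L(n b_j)=\left(\begin{smallmatrix} 1 & 0 \\ n b_j & 1\end{smallmatrix}\right)$ with $a_i, b_j \geq 1$ and at least one factor of each type. Each factor maps the closed positive quadrant into itself, and since the word is alternating it contains an adjacent $UL$ or $LU$ pair, e.g. $U(s)L(t)=\left(\begin{smallmatrix} 1+st & s \\ t & 1\end{smallmatrix}\right)$, which is strictly positive; strict positivity of all four entries then persists under further left or right multiplication by any $U$ or $L$ with positive parameter. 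For a strictly positive determinant-one matrix $\left(\begin{smallmatrix} a & b \\ c & d\end{smallmatrix}\right)$ one has $ad = 1 + bc > 1$, whence $\mathrm{tr} = a + d \geq 2\sqrt{ad} > 2$ by the arithmetic–geometric mean inequality. Thus $\rho(\phi)$ is hyperbolic, and by the dictionary $\phi$ is pseudo-Anosov.

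The main obstacle is the dictionary step: rigorously certifying that an affine map with hyperbolic derivative is pseudo-Anosov, i.e. that the eigen-foliations are genuine minimal, filling, transverse measured foliations on $S_g$ with the correct prong structure at the cone points, rather than foliations of some branched cover. This is exactly where the filling hypothesis enters—it guarantees that the complementary regions are discs and that the flat structure lives on $S_g$ itself—so it cannot be dispensed with. By contrast, the matrix argument of the final step is elementary, and the hypothesis that both $T_\alpha$ and $T_\beta^{-1}$ occur is used only there, to force strict positivity and hence $|\mathrm{tr}|>2$; omitting it would allow purely unipotent (parabolic) images, which correspond precisely to reducible mapping classes.
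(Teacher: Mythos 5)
The paper does not prove this statement at all: it is imported verbatim as Thurston's Theorem~7 from \cite{Thurston} and used as a black box in \S\ref{arbitrary distance}, so there is no internal proof to compare against. What you have written is, in outline, Thurston's own argument (the version in Farb--Margalit, Theorem~14.1, specialized from multicurves to a single filling pair): build the flat structure, represent $\langle T_\alpha, T_\beta\rangle$ affinely with $T_\alpha \mapsto \left(\begin{smallmatrix} 1 & n \\ 0 & 1 \end{smallmatrix}\right)$, $T_\beta^{-1} \mapsto \left(\begin{smallmatrix} 1 & 0 \\ n & 1 \end{smallmatrix}\right)$ (for a pair of curves the Perron--Frobenius normalization $\mu^{1/2}$ of the general construction is exactly $n$), observe that admissible words land in the positive semigroup, and conclude hyperbolicity from $ad = 1 + bc > 1$ and AM--GM. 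Your matrix step is completely correct, including the sign convention that makes positive $\alpha$-twists and negative $\beta$-twists cooperate, and your closing remark correctly identifies where the ``each letter occurs'' hypothesis enters.

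Two caveats are worth flagging. First, you derive the flat structure from Theorem~\ref{ori}, which applies only to \emph{coherent} filling pairs, whereas the theorem as stated concerns arbitrary filling pairs; for a non-coherent pair the dual-square gluing involves flips and yields a half-translation surface (quadratic differential), so you should either run the argument in that category with $PSL(2,\mathbb{R})$, or use Thurston's original construction, which needs no coherence. (For the paper's application this is harmless, since the pairs there are origami pairs.) Second, your ``dictionary'' step leans on an irrationality-of-eigen-slopes heuristic to get minimality, but minimality is not what the definition of pseudo-Anosov requires; the correct and standard statement is that an affine homeomorphism whose derivative has $|\mathrm{tr}| > 2$ preserves the two eigen-direction measured foliations, scaling their transverse measures by $\lambda^{\pm 1}$, which is the definition of pseudo-Anosov directly (this is a genuine theorem of Thurston--Veech and should be cited as such rather than re-derived via slopes). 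You correctly identify this dictionary as the one nonelementary ingredient, so with those two repairs your proposal is a faithful reconstruction of the cited proof.
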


Before we start the proof, we need to show the existence of an origami pair of curves on a closed oriented surface $S_g$.

\begin{Lem}
 Given a closed oriented surface $S_g$ of genus $g\geq 2$, there exists a coherent filling pair on it. 
\end{Lem}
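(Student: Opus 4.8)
The plan is to exhibit a coherent filling pair in each genus $g \geq 2$ through the $[1,1]$-origami correspondence established in Theorem \ref{ori}. That correspondence is reversible: starting from a $[1,1]$-origami, the core $\alpha$ of its unique vertical annulus and the core $\beta$ of its unique horizontal annulus form a pair of simple closed curves whose complement consists of discs (so the pair fills $S_g$) and whose geometric and algebraic intersection numbers agree, the coherence being forced by the translation gluings (left-to-right and bottom-to-top). Consequently it suffices to produce, for every $g \geq 2$, a $[1,1]$-origami of genus $g$; the associated cores then furnish the desired coherent filling pair.

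I would encode a candidate origami on $n$ squares by two permutations $h, v$ of the squares, where $h$ records the right-edge identifications and $v$ the top-edge identifications. Demanding that $h$ be a single $n$-cycle produces exactly one horizontal annulus, and demanding that $v$ be a single $n$-cycle produces exactly one vertical annulus; either condition already forces the surface to be connected. The genus is then read off from the Euler-characteristic relation $2 - 2g = V - n$, where $V$, the number of cone points, equals the number of cycles of the corner-identification permutation (a commutator-type word in $h$ and $v$, a cycle of length $\ell$ contributing a cone point of angle $2\pi\ell$). For $g \geq 3$ I would take $n = 2g-1$ squares and arrange $h$ and $v$ so that this commutator is a single $(2g-1)$-cycle; then $V = 1$, the formula gives genus exactly $g$, and one obtains a single cone point of angle $2\pi(2g-1)$. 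The existence of such $h$ and $v$ for every $g \geq 3$ is precisely the content of the Aougab--Menasco--Nieland construction \cite{AMN} (see also Jeffreys \cite{Jef}) of minimally intersecting coherent filling pairs.

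The genus-two case is the main obstacle, and it is genuinely exceptional: three squares cannot be assembled into a $[1,1]$-origami of genus $2$, since the two $3$-cycles of $S_3$ generate the abelian group $A_3$, so their commutator is trivial and has three cycles, giving $V = 3$ and hence $g = 1$. This is the combinatorial shadow of the fact that the minimal intersection number $2g-1 = 3$ is not geometrically realizable when $g = 2$. For this case I would instead invoke Hempel's example of \S\ref{Hempel's example}, which is built from four squares (with two cone points) and which has already been verified to be a coherent filling pair. Taking Hempel's pair for $g = 2$ together with the family above for $g \geq 3$ settles all $g \geq 2$ and completes the proof.
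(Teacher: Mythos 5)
Your proof is correct and rests on exactly the same two pillars as the paper's own (two-line) proof: Hempel's example for $g=2$ and the Aougab--Menasco--Nieland construction for $g>2$. The additional scaffolding you supply---the permutation encoding of $[1,1]$-origamis, the Euler-characteristic count, and the explanation of why three squares cannot yield genus $2$---is accurate and illuminating, but it is elaboration on the same citations rather than a different route.
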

\begin{proof}
 We can use Hempel's example for the genus 2 case. 
 When $g>2$, Aougab-Menasco-Nieland's construction \cite{AMN} of minimally intersecting filling pair gives a coherent filling pair. 
\end{proof}

\begin{Prop}\label{large}
For any non-separating curve $\alpha$, there is another non-separating curve $\beta$ intersecting coherently with $\alpha$, and the distance between them can be taken to be arbitrarily large in the curve graph $\mathcal{C}^1(S_g)$.
\end{Prop}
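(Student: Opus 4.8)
The plan is to combine the three tools that have just been assembled: the existence of a coherent filling pair on every $S_g$ (the preceding Lemma), Thurston's construction of pseudo-Anosovs from a filling pair (Theorem \ref{Thurston}), and the linear-growth estimate for pseudo-Anosov orbits in the curve graph (Theorem \ref{MM_pS}). The strategy is to manufacture a pseudo-Anosov map $h$ that fixes $\alpha$ up to coherence and pushes it an arbitrary distance away, so that $\beta := h^N(\alpha)$ is the desired curve.

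First I would invoke the Lemma to fix, for our given non-separating $\alpha$, a curve $\alpha'$ so that $(\alpha, \alpha')$ is a coherent filling pair; since any two non-separating curves are related by a homeomorphism of $S_g$, we may assume $\alpha'$ is chosen so that $(\alpha,\alpha')$ fills. Because $(\alpha,\alpha')$ is a filling pair, Theorem \ref{Thurston} tells us that any word in positive powers of $T_\alpha$ and negative powers of $T_{\alpha'}$ (using each at least once) is pseudo-Anosov; I would take a fixed such word, call it $h$, e.g.\ $h = T_\alpha T_{\alpha'}^{-1}$. Then $h$ is pseudo-Anosov on $S_g$, and since $\xi(S_g) > 3 > 1$, Theorem \ref{MM_pS} supplies a constant $c>0$ with
\[
d_{\mathcal{C}^1(S_g)}\big(h^n(\alpha), \alpha\big) \geq c|n|
\]
for all $n$. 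Given any target $D$, choosing $N > D/c$ and setting $\beta := h^N(\alpha)$ forces $d_{\mathcal{C}^1(S_g)}(\alpha,\beta) > D$, so the distance can be made arbitrarily large.

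It remains to arrange that $(\alpha,\beta)$ is \emph{coherent}, and this is the step I expect to be the main obstacle: linear growth in distance is immediate, but coherence is a delicate algebraic-intersection condition that is not preserved by arbitrary homeomorphisms. The key observation is that $h$ is built from Dehn twists about $\alpha$ and $\alpha'$, and one can track the algebraic intersection number $\hat{\imath}(\alpha, h^n(\alpha))$ homologically. Because $h_* = (T_\alpha)_* (T_{\alpha'}^{-1})_*$ acts on $H_1(S_g;\mathbb{Z})$ by the transvection formulas $T_\gamma(x) = x + \langle x,\gamma\rangle\,[\gamma]$, the algebraic intersection $\langle \alpha, h^n(\alpha)\rangle$ can be computed explicitly in terms of the intersection pairing among $[\alpha]$ and $[\alpha']$; I would show that the geometric intersection $|\alpha \cap h^n(\alpha)|$ equals this algebraic count in absolute value. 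Concretely, I would verify that the stable and unstable foliations of $h$ come from the measured foliations carried by $\alpha$ and $\alpha'$, so that the images $h^n(\alpha)$ are realized by train-track / foliation representatives meeting $\alpha$ efficiently and coherently, whence minimal and algebraic intersection agree. This reduces the coherence claim to a homological bookkeeping computation, and once it is in place the Proposition follows by taking $\beta = h^N(\alpha)$ with $N$ large; note this simultaneously gives Theorem \ref{infinite diameter}, since the same $\beta$ lies in $\ns$ and is forced far from $\alpha$ there as well by uniform hyperbolicity and the quasi-isometric inclusion $\ns \hookrightarrow \mathcal{C}^1(S_g)$.
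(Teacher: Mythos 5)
Your overall architecture matches the paper's: take a coherent filling pair containing $\alpha$ (via the preceding Lemma and change of coordinates), form the Thurston pseudo-Anosov $h = T_\alpha T_{\alpha'}^{-1}$, and use the Masur--Minsky linear lower bound to push a curve arbitrarily far while trying to preserve coherence. One structural difference: you iterate $h$ on $\alpha$ itself, whereas the paper iterates on the partner curve $\gamma=\alpha'$, for which the base case is a single twist since $h(\gamma)=T_\alpha T_\gamma^{-1}(\gamma)=T_\alpha(\gamma)$; the paper then pays for this with a triangle inequality $d(\alpha,h^n(\gamma)) \geq d(\gamma,h^n(\gamma))-d(\alpha,\gamma)$ on the distance side. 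Either choice is workable.

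The genuine gap is in the coherence step, which you correctly identify as the main obstacle but do not close. The transvection formula $(T_\gamma)_*(x)=x+\langle x,[\gamma]\rangle[\gamma]$ computes only the algebraic intersection number $\langle[\alpha],[h^n(\alpha)]\rangle$; coherence requires in addition that the minimal geometric intersection number equals this in absolute value, and homological bookkeeping cannot see that. Your proposed fix --- that the stable and unstable foliations of $h$ ``come from the measured foliations carried by $\alpha$ and $\alpha'$,'' so that $h^n(\alpha)$ meets $\alpha$ efficiently and coherently --- is not right as stated: in Thurston's construction the invariant foliations of $h$ are foliations of irrational slope on the flat structure determined by the pair $(\alpha,\alpha')$ jointly, not foliations carried by either curve individually, and the curves $h^n(\alpha)$ are not leaves of them, so extracting coherence this way would require a substantial further argument. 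The paper closes this step concretely and combinatorially: it pictures the Dehn twist as a curve surgery, taking parallel copies and resolving every intersection point of a coherent configuration in the consistent direction, and observes that $T_\alpha(\gamma)$, and likewise $T_\gamma^{-1}$ applied to the result, remains coherent with both $\alpha$ and $\gamma$; induction on $n$ then gives coherence of $h^n(\gamma)$ with $\alpha$. If you replace your homological/foliation sketch with that surgery argument (applied first to $T_{\alpha'}^{-1}(\alpha)$ and then to $T_\alpha$ of the result), your proof goes through.
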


\begin{proof}

\begin{figure}[ht]
\vspace{0cm}
\scalebox{.60}{\includegraphics[origin=c, width=1.20 \textwidth,  height = 1.0 \textwidth]{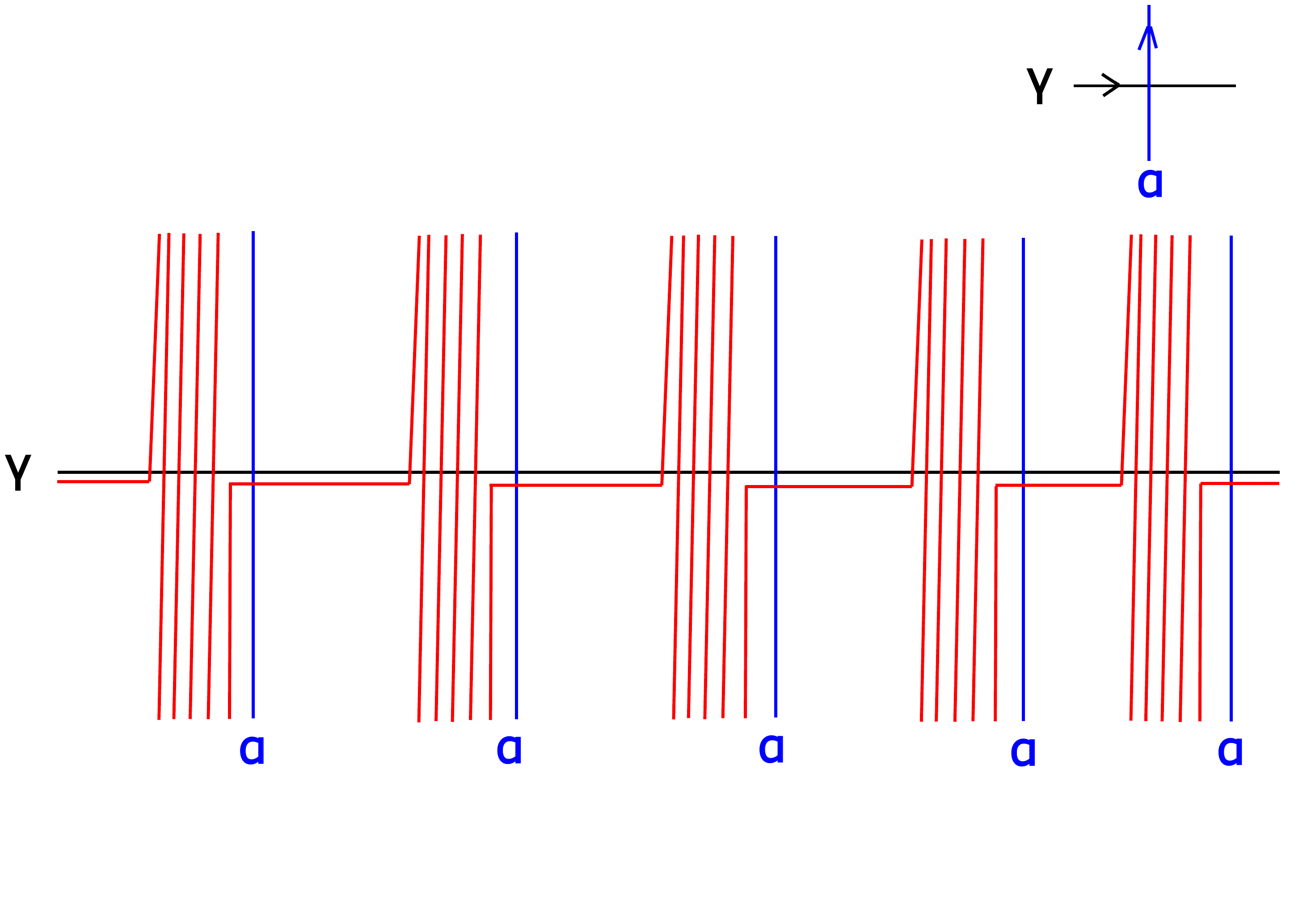}}
\vspace{-1cm}
\caption{Dehn twist $T_{\alpha}(\gamma)$ is represented by curve surgery, where $\alpha$ and $\gamma$ is a coherent pair and $|\alpha \cap \gamma|=5$.}
\label{curve_surgery}
\end{figure}

Take $\alpha$ and $\gamma$ to be a coherent filling pair from the previous lemma. Let $h=T_{\alpha}\circ T_{\gamma}^{-1}$, then $h(\gamma)=T_{\alpha}\circ T_{\gamma}^{-1}(\gamma)=T_{\alpha}(\gamma)$. It can be observed that $T_{\alpha}(\gamma)$ is in coherent intersection with $\alpha$ and $\gamma$ by using the curve surgery to picture the Dehn twist. In Fig. \ref{curve_surgery}, the intersection number $|\alpha \cap \gamma|=5$. The horizontal black line is the curve $\gamma$ and the vertical blue lines are the arcs in the curve $\alpha$. 
 
To start off, we take a horizontal parallel copy of the curve $\gamma$, and $|\alpha \cap \gamma|$ parallel copies of $\alpha$ with same orientation. At each intersection point, if we follow the arc of $\gamma$ towards the intersection with orientation indicated in the Fig. \ref{curve_surgery}, it goes to the left at the intersection then follow along the arc of $\alpha$ toward the next intersection, we go to the right when we hit a copy of $\gamma$. 
It is straightforward to see that the resulting curve $T_{\alpha}(\gamma)$ is in coherent intersection with $\gamma$, as well as $\alpha$. 

By the same curve surgery, $T_{\gamma}^{-1}(T_{\alpha}(\gamma))$ coherently intersects with $\alpha$ and $\gamma$, so does $T_{\alpha}\circ T_{\gamma}^{-1}(T_{\alpha}(\gamma))$. It follows that $h^2(\gamma)$ is in coherent intersection with $\alpha$ and $\gamma$. By induction, $h^n(\gamma)$ coherently intersects $\alpha$ and $\gamma$. Let $h^n(\gamma)$ be $\beta$, then 
$d_{\mathcal{C}(S_{g})}(\alpha, \beta)=d_{\mathcal{C}(S_{g})}(\alpha, h^n(\gamma))\geq  d_{\mathcal{C}(S_{g})}(\gamma,h^n(\gamma)) - d_{\mathcal{C}(S_{g})}(\alpha, \gamma)$. By Thurston's construction in Theorem \ref{Thurston}, $h$ is a pseudo-Anosov homeomorphism. Using Theorem \ref{MM_pS}, we show that 
$d_{\mathcal{C}(S_{g})}(\gamma,h^n(\gamma))\rightarrow \infty$, as $n\rightarrow \infty$. That implies $d_{\mathcal{C}(S_{g})}(\alpha, \beta)$ is arbitrarily large.
\end{proof}

Since the non-separating curve graph $\ns$ is quasi-isometric to the curve graph $\mathcal{C}^1(S_g)$, Theorem \ref{infinite diameter} follows immediately from Proposition \ref{large}.

\end{document}